\newtheorem{customtheorem}{Theorem}
\newtheorem{theorem}{Theorem}
\numberwithin{theorem}{section}
\newtheorem{corollary}[theorem]{Corollary}
\newtheorem{example}[theorem]{Example}
\newtheorem{lemma}[theorem]{Lemma}
\newtheorem{proposition}[theorem]{Proposition}
\newtheorem{remark}[theorem]{Remark}
\newenvironment{proof}[1][Proof]{\noindent\textbf{#1.} }{\ \rule{0.5em}{0.5em}}
\title{\mbox{Stability of binomials over finite fields}}
\author{Mohamed Ayad, Laboratoire de Math\'ematiques Pures et Appliqu\'ees,\\ Universit\'e du Littoral, F-62228 Calais. France\\ E-mail: ayadmohamed502@yahoo.com \and
	Boualem Benseba, Laboratoire ACC (Arithm\'etique, Codage et Combinatoire),\\   U. S. T. H. B. El Alia, Alger. Alg\'erie.\\E-mail: b.benseba@usthb.dz\and
	Mohamed Madi, Laboratoire ACC (Arithm\'etique, Codage et Combinatoire),\\  U. S. T. H. B. El Alia, Alger. Alg\'erie.\\
	E-mail: mo7amadi@outlook.com}
\begin{document}
	\maketitle

	\hfill{\scriptsize

		MSC: 11T06, 11T55, 12E05.
		
		Keywords: Irreducibility, Iteration, Stability, Finite fields, Mersenne primes.
		
		\textbf{Abstract} A polynomial $f(x)$ over a field $K$ is said to be stable if all its iterates are irreducible over $K$. L. Danielson and B. Fein have shown that over a large class of fields $K$, if $f(x)$ is an irreducible monic binomial, then it is stable over $K$. In this paper it is proved that this result no longer holds over finite fields.  Necessary and sufficient conditions are given in order that a given binomial is stable over $\mathbb{F}_q$. 
These conditions are used to construct  a table listing the stable binomials over  $\mathbb{F}_q$ of the form $f(x)=x^d-a$, $a\in\mathbb{F}_q\setminus\{0,1\}$,  for $q \leq 27$ and $d \leq 10$. The paper ends with a brief link with Mersenne primes.		
		
\section{Introduction}
		\bigskip
		Let $K$ be a field and $f(x)$ be a non constant polynomial with coefficients in $K$. Set $f_0(x)=x$, $f_1(x)=x$ and $f_n(x)=\Big(f_{n-1}\circ f\Big)(x)$ for $n\geq 2$. Following R. W. K. Odoni \cite{O1}, this polynomial is said to be stable over $K$ if $f_n(x)$ is irreducible over $K$ for all $n\geq 0$. The first example of such a polynomial appears in \cite{O1}, where it is proved that $f(x)=x^2-x+1$ is stable over $\mathbb{Q}$. In \cite{O2}, the same author shows that any iterate of an Eiseistein polynomial is itself Eiseinstein, thus $f(x)$ is stable. This gives a class of stable polynomials over fraction fields of factorial domains. 
		
		This result implies that, given an integral domain $A$, with fraction field $K$ and algebraically independent variables, $s_1,s_2,\ldots,s_n$, the generic polynomial of degree $n$,
		\begin{equation*}G(s,\ldots,s_n,x)=x^n-s_1x^{n-1}+\cdots+(-1)^ns_n\in A[s_1,\ldots,s_n][x]\end{equation*}
		is stable over $K(s_1,\ldots,s_n)$.
		
		Inspired by this result, the paper \cite{A} considers the stability of the generic polynomial of the integers of a number field. More precisely, let $K$ be a number field of degree $n$, $\{\omega_1,\ldots,\omega_n\}$ be an integral basis. Let $u_1,\ldots,u_n$ be independent variables over $\mathbb{Q}$ and let 
		\begin{equation*}
		F(u_1,\ldots,u_n,x)=\prod_{i=1}^n\Big(x-(u_1\omega_1^{\sigma_i}+\cdots+u_n\omega_n^{\sigma_i}\Big ),\end{equation*}
		where $\sigma_1,\ldots,\sigma_n$ are the distinct embeddings of $K$ in $\mathbb{C}$. Then under some arithmetical conditions, this polynomial is stable over $\mathbb{Q}(u_1,\ldots,u_n)$. At our best knowledge the stability of the polynomial $F$, in the general case, is still open. 
		
		The stability of quadratic polynomials over various fields was the subject of \cite{AM1} and \cite{AM2}.
		
		For the stability of trinomials over finite fields, we refer to \cite{AMS} 
		
		In \cite{DF}, the following surprising result is proved. Let $f(x)=x^n-b$ be an irreducible polynomial over $K$, then $f(x)$ is stable over $K$ in the following cases
		\begin{itemize}
			\item [(i)] $K=\mathbb{Q}$ and $b\in\mathbb{Z}$.
			\item [(ii)] $K=\mathbb{Q}(t)$ and $b\in\mathbb{Z}[t]$.
			\item [(iii)] $K=F(t)$ and $b\in F[t]$, where $F$ is an arbitrary algebraically closed field.
			\item [(iv)] $K=F(t)$, $b\in F(t)\setminus F$, $n\geq 3$ and $F$ an arbitrary field of characteristic $0$.
		\end{itemize}
	
	We will see in the last section of this paper that this result does not hold over finite fields.	
		We will give an algorithm which answer the stability or not of a given binomial over a finite field. 
		
		For fixed integer $d\geq 2$, define  inductively the polynomials $%
						(P_{n}(x))_{n\geq 1}$ by $P_{1}(x)=x$ and $%
						P_{n}(x)=[P_{n-1}(x)]^{d}+(-1)^{d-1}x$ for $n\geq 2.$ 
						
Let $a\in\mathbb{F}_q^\star$ and let $n_0$ and $m_0$ be indices with $n_0<m_0$ such that $P_{n_0}(a)=P_{m_0}(a)$. Suppose that $n_0$ first and $m_0$ next are choosen to be minimal for this propery, then $\{P_{n}(a),n\geq 1\}=\{P_1(a),\ldots,P_{m_0-}(a)\}$. This set play an important role in the stability of $f(x)=x^d-a$. In section 4, the following result is proved.

\begin{customtheorem}\label{theorem A}
							Let $d\geq 2$ be an integer such that $d\neq 0\pmod 4 $ and let $%
							f(x)=x^{d}-a\in \mathbb{F} _{q}[x].$ Let $n_{0}$, $m_{0}$, with $n_{0}<m_{0},$ be
							minimal integers such that $P_{n_{0}}(a)=P_{m_{0}}(a).$ Then $f(x)$ is
							stable over $\mathbb{F} _{q}$ if and only if for any prime number $l\mid d$
							and any $k\in \left\{ 1,...,m_{0}-1\right\} ,$ $P_{k}(a)\notin \mathbb{F}
							_{q}^{l}.$  Here $P_{k}(a)\notin \mathbb{F}
							_{q}^{l}.$ means in particular $P_k(a)\neq 0$.
						\end{customtheorem}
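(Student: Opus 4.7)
My plan is to run an induction on $n$ showing that $f_n$ is irreducible over $\mathbb{F}_q$ if and only if $f_{n-1}$ is irreducible and $P_n(a)\notin\mathbb{F}_q^l$ for every prime $l\mid d$; the finiteness of the forward orbit $\{P_k(a):k\geq 1\}=\{P_1(a),\ldots,P_{m_0-1}(a)\}$ then collapses the infinitely many conditions for stability onto the stated finite list.

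For the inductive step, assume $f_{n-1}$ is irreducible of degree $d^{n-1}$, fix a root $\beta$, and set $L:=\mathbb{F}_q(\beta)=\mathbb{F}_{q^{d^{n-1}}}$. Writing $f_n(x)=f_{n-1}(f(x))$, any root $\alpha$ of $f_n$ satisfies $f(\alpha)=\beta$, hence $\alpha^d=a+\beta$; comparing degrees, $f_n$ is irreducible over $\mathbb{F}_q$ iff $x^d-(a+\beta)$ is irreducible over $L$. Since $4\nmid d$ by hypothesis, the Capelli--Vahlen criterion in its simpler form reduces the latter to $a+\beta\notin L^l$ for every prime $l\mid d$. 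To transport this condition back to $\mathbb{F}_q$ I would invoke the following norm/power lemma: whenever $l\mid q-1$, for $\gamma\in L^\ast$ one has $\gamma\in L^l$ iff $N_{L/\mathbb{F}_q}(\gamma)\in\mathbb{F}_q^l$. The proof is a direct index count in the cyclic multiplicative groups: both $L^l$ and $N^{-1}(\mathbb{F}_q^l)$ have index $l$ in $L^\ast$ (using surjectivity of the norm), so the trivial inclusion $L^l\subseteq N^{-1}(\mathbb{F}_q^l)$ is forced to be an equality. The case $l\nmid q-1$ is handled automatically: then $\mathbb{F}_q^l=\mathbb{F}_q$, so $P_1(a)=a\in\mathbb{F}_q^l$, the theorem's condition already fails at $k=1$, and $f$ is not even irreducible.

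The last piece is the norm computation. From $f_{n-1}(y)=\prod_i(y-\beta_i)$,
\begin{equation*}
N_{L/\mathbb{F}_q}(a+\beta)=\prod_i(a+\beta_i)=(-1)^{d^{n-1}}\,f_{n-1}(-a),
\end{equation*}
and a short induction on $n$, splitting on the parity of $d$, yields $f_{n-1}(-a)=P_n(a)$ when $d$ is even and $f_{n-1}(-a)=-P_n(a)$ when $d$ is odd. In both parities the sign $(-1)^{d^{n-1}}$ cancels and one obtains the uniform identity $N_{L/\mathbb{F}_q}(a+\beta)=P_n(a)$, closing the induction. The base case $n=1$ is Capelli applied directly to $x^d-a$, i.e.\ $P_1(a)=a\notin\mathbb{F}_q^l$, and the final remark $P_k(a)\neq 0$ is automatic since $0\in\mathbb{F}_q^l$. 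The main obstacle will be the parity-dependent sign bookkeeping in the norm computation, which must be arranged so that both parities of $d$ collapse to the same clean identity $N=P_n(a)$; the hypothesis $4\nmid d$ is exactly what avoids the additional $-4c\notin L^4$ clause that would otherwise appear in Capelli--Vahlen.
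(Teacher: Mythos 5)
Your argument is correct, and it reaches the theorem through the same circle of ideas as the paper --- the Capelli criterion for $x^d-c$ (Lemma 3.3), the composition criterion ``$f_{n-1}\circ f$ irreducible iff $f_{n-1}$ is and $x^d-(a+\beta)$ is irreducible over $\mathbb{F}_q(\beta)$'' (Lemma 3.2), and the fact that for $l\mid q-1$ an element of an extension of $\mathbb{F}_q$ is an $l$-th power iff its norm down to $\mathbb{F}_q$ is (your index count is exactly Lemma 2.3(3)) --- but the decisive computation is organized quite differently. The paper's Lemma 4.1 descends the tower $\mathbb{F}_q(\alpha_{n-1})\supset\cdots\supset\mathbb{F}_q(\alpha_1)\supset\mathbb{F}_q$ one relative norm at a time, proving inductively that each $P_k(a)+\alpha_{n-k}$ is an $l$-th power; this forces it to track signs through the relation $[\beta_{n-k}-P_k(a)]^d-a-\alpha_{n-(k+1)}=0$ and, when $d$ is even, to invoke the conjugacy of $\alpha_{n-(k+1)}$ and $-\alpha_{n-(k+1)}$. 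You instead take the full norm in one stroke, $N_{L/\mathbb{F}_q}(a+\beta)=\prod_i(a+\beta_i)=(-1)^{d^{n-1}}f_{n-1}(-a)$, and verify the scalar identity $(-1)^{d^{n-1}}f_{n-1}(-a)=P_n(a)$ by a separate induction via $f_{n-1}(-a)=[f_{n-2}(-a)]^d-a$, where both parities of $d$ collapse because $(-1)^{d^{n-1}}=-(-1)^{d-1}$ for $n\geq 2$ in either case. This buys a cleaner statement: the two implications of the theorem, which the paper proves separately as parts 1 and 2 of Lemma 4.1, become a single equivalence ``$f_n$ irreducible iff $f_{n-1}$ irreducible and $P_n(a)\notin\mathbb{F}_q^{l}$ for all $l\mid d$,'' after which the finiteness of the orbit $\{P_1(a),\ldots,P_{m_0-1}(a)\}$ closes the proof exactly as in Theorem 4.2. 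Two points you treat briskly but which do hold: when $l\nmid q-1$ (including $l=p$) every element of both $\mathbb{F}_q$ and $L$ is an $l$-th power, so the equivalence is not disturbed and the condition already fails at $k=1$; and the degenerate value $P_n(a)=0$ cannot occur while $f_{n-1}$ is irreducible (it would make $-a\in\mathbb{F}_q$ a root of $f_{n-1}$), so the convention $0\in\mathbb{F}_q^{l}$ meshes correctly with Capelli and with the theorem's parenthetical remark.
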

						
						This theorem is used for the construction of the table placed at the end of the paper. Obviously, the integer $m_0$ defined above satisfies the condition $m_0\leq q+1$. Theorem 4.4 shows that $m_0\leq (q-1)/\delta+2$, where $\delta=gcd(q-1,d)$. There are examples where this bound is reached. Suppose that $f(x)$ is not stable and let $r_0\geq 1$ be the smallest integer such that $f_{r_0-1}(x)$ is irreducible and $f_{r_0}(x)$ is reducible over $\mathbb{F}_q$. The preceding theorem implies $r_0\leq m_0-1\leq (q-1)/\delta+1$. Notice that this bound for $r_0$ depends on $q$.

{\bf Question} Does there exist an integer $N$ such that for any prime power $q$ and any $f(x)=x^d-a\in \mathbb{F}_q[x]$, if $f_1(x),\ldots, f_N(x)$ are irreducible over $\mathbb{F}_q$, then $f(x)$ is stable?

The example $f(x)=x^2-12\in\mathbb{F}_{19}[x]$ extracted from the table in section 4, shows that $f_1(x),\ldots,f_5(x)$ are irreducible while $f_6(x)$is reducible. This shows that if $N$ exists, then $N\geq 6$.
		
		\vspace{0.2cm}
		
		\textbf{Notation}
		The following notations will be used through the paper.
		
		$\mathbb{F}_q$: the finite field with $q$ elements.
		
		$lc(f)$: leading coefficient of $f(x)$.
		
		$f_n(x)$: $n$-th iterate of $f(x)$.
		
		$N_{F/K}(\alpha)$: norm of $\alpha$ relatvely to the extension $F/K$.
		
		$m\mid n$: $m$ divides $n$.
		
		\section{Preliminary results}
		\begin{lemma}
		
			Let $q$ be a power of a prime $p\neq 2$, $l$ be a prime number and $\delta$ be a positive integer such that 
			$l\mid q-1$ and $\delta\equiv 1\pmod l$. Let $\alpha \in \mathbb{F} _{q}^{\ast }.$ Then $\alpha $ is 
			an $l$-th power in $\mathbb{F} _{q}^{\ast }$ if and only if $\alpha $ is an $l$-th power in $
			\mathbb{F} _{q^{\delta }}^{\ast }.$
		\end{lemma}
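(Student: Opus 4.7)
The forward implication is immediate since $\mathbb{F}_q^{\ast}\subset\mathbb{F}_{q^\delta}^{\ast}$: if $\alpha=\beta^l$ for some $\beta\in\mathbb{F}_q^{\ast}$, the same equation witnesses $\alpha$ as an $l$-th power in the larger field. So the real content lies in the converse. My plan is to exploit the standard cyclic-group criterion: since $l\mid q-1$ and $l\mid q^\delta-1$ (using $q-1\mid q^\delta-1$), an element $\alpha\in\mathbb{F}_q^{\ast}$ is an $l$-th power in $\mathbb{F}_q^{\ast}$ (resp.\ in $\mathbb{F}_{q^\delta}^{\ast}$) if and only if $\alpha^{(q-1)/l}=1$ (resp.\ $\alpha^{(q^\delta-1)/l}=1$).

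Assuming $\alpha$ is an $l$-th power in $\mathbb{F}_{q^\delta}^{\ast}$, I would write
\[
\frac{q^\delta-1}{l}=\frac{q-1}{l}\cdot s,\qquad s=1+q+q^2+\cdots+q^{\delta-1},
\]
so that setting $\beta=\alpha^{(q-1)/l}\in\mathbb{F}_q^{\ast}$, the hypothesis becomes $\beta^{s}=1$. On the other hand $\beta^l=\alpha^{q-1}=1$, so the order of $\beta$ divides both $l$ and $s$, i.e.\ divides $\gcd(l,s)$.

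The crux of the argument is to show $\gcd(l,s)=1$, and this is precisely where the assumption $\delta\equiv 1\pmod l$ intervenes. Since $q\equiv 1\pmod l$, reducing the geometric sum gives
\[
s\equiv 1+1+\cdots+1=\delta\equiv 1\pmod l,
\]
so $l\nmid s$ and, $l$ being prime, $\gcd(l,s)=1$. Hence $\beta=1$, i.e.\ $\alpha^{(q-1)/l}=1$, which means $\alpha$ is an $l$-th power in $\mathbb{F}_q^{\ast}$. The main (and only) subtle point is the modular computation of $s$; everything else is a routine manipulation of orders in cyclic groups. The hypothesis $p\neq 2$ is not actually used in this argument — it is presumably imposed for uniformity with later sections of the paper.
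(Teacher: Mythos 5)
Your proof is correct and rests on the same key computation as the paper's, namely that $(q^{\delta}-1)/(q-1)=1+q+\cdots+q^{\delta-1}\equiv\delta\equiv 1\pmod{l}$; the paper phrases this via the exponent of $\alpha$ with respect to a generator of $\mathbb{F}_{q^{\delta}}^{\ast}$, while you phrase it via the criterion $\alpha^{(q-1)/l}=1$, but these are interchangeable reformulations of being an $l$-th power in a cyclic group. Your side remarks (the forward direction being trivial, and $p\neq 2$ being unused) are also accurate.
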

		
		\begin{proof}
			Let $\xi $ be a generator of $\mathbb{F} _{q^{\delta }}^{\ast },$ then $\eta
			=\xi ^{(q^{\delta }-1)/(q-1)}$ is a generator of $\mathbb{F} _{q}^{\ast }.$
			Set $\alpha =\eta ^{u}$ where $u$ is a 
			nonnegative integer, then :
			\begin{equation*}
			\alpha =\xi ^{u(q^{\delta }-1)/(q-1)}=\xi ^{u(1+q+\cdot \cdot \cdot
				+q^{\delta -1})}:=\xi ^{v}
			\end{equation*}
			
			We have $v=u(1+q+\cdots +q^{\delta -1})\equiv u\delta\pmod l\equiv u\pmod l.$
			Hence $\alpha $ is an $l$-th power in $\mathbb{F} _{q}^{\ast }$ if and only if $
			\alpha $ is an $l$-th power in $\mathbb{F} _{q^{\delta }}^{\ast }.$
		\end{proof}
		
		\begin{lemma}
		Let $p$ be an odd prime number, $q=p^{m}$ with $m\geq 1.$ Let $k\geq 1$ be
			an integer and $\alpha \in \mathbb{F} _{q^{k}}^{\ast }.$ Then :%
			\begin{itemize}
				\item [(i)] \begin{equation*} \alpha \text{ is a square in }\mathbb{F} _{q^{k}}\iff
				\alpha ^{(q^{k}-1)/2}=1\end{equation*}%
				
				\item [(ii)] $$-1\text{ is a square in $\mathbb{F} _{q^{k}}$}\iff	\begin{cases}
				k &\text{is even or,}\\
				k &\text{is odd and $q\equiv 1\pmod 4$} 
				\end{cases}$$
			\end{itemize}
			\end{lemma}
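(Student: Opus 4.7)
The plan is to treat the two parts separately, exploiting the cyclic structure of $\mathbb{F}_{q^k}^{\ast}$ and then specializing to $\alpha=-1$ for the second part.

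For part (i), I would fix a generator $g$ of the cyclic group $\mathbb{F}_{q^k}^{\ast}$, which has order $q^k-1$, and write $\alpha=g^{j}$ for some integer $j$. Since $q$ is odd, $q^k-1$ is even, so the expression $(q^k-1)/2$ makes sense as an integer. The key observation is that the set of squares in $\mathbb{F}_{q^k}^{\ast}$ is exactly $\{g^{2i}\}$, i.e., the elements with even exponent. I would then compute
\[
\alpha^{(q^k-1)/2}=g^{\,j(q^k-1)/2},
\]
and note that this equals $1$ precisely when $(q^k-1)\mid j(q^k-1)/2$, equivalently when $j$ is even. Combining the two equivalences yields (i).

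For part (ii), I would specialize (i) to $\alpha=-1$. Since the only solutions to $x^2=1$ in a field are $\pm 1$, and since $p$ is odd, we have $-1\neq 1$, so $(-1)^{(q^k-1)/2}$ equals either $1$ or $-1$, and equals $1$ iff $(q^k-1)/2$ is even, i.e., iff $4\mid q^k-1$. The remainder is a short case analysis on $q\bmod 4$ and the parity of $k$: if $k$ is even then $q^k$ is an odd square, hence $\equiv 1\pmod 4$; if $k$ is odd then $q^k\equiv q\pmod 4$, so the condition reduces to $q\equiv 1\pmod 4$.

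No serious obstacle is expected here. The only point requiring mild care is justifying that $(q^k-1)/2$ is an integer (using $p\neq 2$) and that $-1\neq 1$ in $\mathbb{F}_{q^k}$, so that the two possible values $\pm 1$ of the square root of $1$ are distinguishable. Everything else is a direct application of the cyclic-group structure of $\mathbb{F}_{q^k}^{\ast}$.
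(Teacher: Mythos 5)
Your proof is correct and follows essentially the same route as the paper: part (i) via a generator of the cyclic group $\mathbb{F}_{q^k}^{\ast}$ and the parity of the exponent, and part (ii) by specializing (i) to $\alpha=-1$ and reducing to $q^k\equiv 1\pmod 4$. The only difference is that you make explicit the (correct) observation that $-1\neq 1$ since $p$ is odd, which the paper leaves implicit.
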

		\begin{proof}
			Let $\xi $ be a generator of $\mathbb{F} _{q^{k}}^{\ast }$ and set $\alpha
			=\xi ^{u},$ $u\geq 0.$ Then :
			
			\textbf{(i)}%
			\begin{align*}
			\alpha ^{(q^{k}-1)/2}=1 & \iff \xi ^{u(q^{k}-1)/2}=1 \\ 
			&  \\ 
			& \iff u(q^{k}-1)/2\equiv 0\pmod q^{k}-1 \\ 
			&  \\ 
			& \iff u\equiv 0\pmod 2\\ 
			&  \\ 
			& \iff \alpha \text{ is a square in }\mathbb{F} _{q^{k}}%
			\end{align*}
			
			\textbf{(ii) \ }By\textbf{\ (i) }we have\textbf{\ :}%
			\begin{align*}
			-1\text{ is a square in }\mathbb{F} _{q^{k}} & \iff (-1)^{(q^{k}-1)/2}=1 \\ 
			&  \\ 
			& \iff (q^{k}-1)/2\equiv 0\pmod 2 \\ 
			&  \\ 
			& \iff q^{k}-1\equiv 0\pmod 4 \\ 
			&  \\ 
			& \iff q^{k}\equiv 1\pmod 4 \\ 
			&  \\ 
			& \iff k\text{ is even or }k\text{ is odd and }q\equiv 1\pmod 4 %
			\end{align*}
		\end{proof}
		\begin{lemma}
			Let $\delta \geq 1$ an integer, $K=\mathbb{F} _{q}$ and $F=\mathbb{F}
			_{q^{\delta }}.$
			
			\textbf{1.} The norm map $N_{F/K}:F\rightarrow K$ is surjective and it maps $%
			F^{\ast }$ onto $K^{\ast }.$
			
			\textbf{2.} Let $\xi $ a generator of $F^{\ast }$ and $\eta =\xi
			^{(q^{\delta }-1)/(q-1)}.$ Then $\eta $ is a generator of $K^{\ast }.$
			
			\textbf{3.} Let $l$ be a prime number. If $l\nmid q-1,$ then the morphism $%
			\Phi _{l}:F^{\ast }\rightarrow K^{\ast }$ such that $\Phi _{l}=x^{l}$ is
			one-to-one and onto. If $l\mid q-1,$ then $Ker(\Phi _{l})=\left\{ x\in F^{\ast
			},x^{l}=1\right\} $ and $(F^{\ast }/Ker(\Phi _{l}))\simeq (K^{\ast })^{l}.$
			Moreover, for any $a\in F^{\ast },$ $a\in (F^{\ast })^{l}$ if and only if $%
			N_{F/K}(a)\in (K^{\ast })^{l}.$
		\end{lemma}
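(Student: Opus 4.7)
The common thread running through all three parts of Lemma 2.3 is the identity
\[
N_{F/K}(\alpha)=\alpha\cdot\alpha^{q}\cdots\alpha^{q^{\delta-1}}=\alpha^{(q^{\delta}-1)/(q-1)},
\]
valid because $F/K$ is Galois with cyclic Galois group generated by the Frobenius $\sigma:x\mapsto x^{q}$. Combined with the fact that $F^{*}$ is cyclic of order $q^{\delta}-1$, every assertion reduces to exponent arithmetic in a cyclic group.

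For Parts 1 and 2 I would fix a generator $\xi$ of $F^{*}$ and set $\eta:=\xi^{(q^{\delta}-1)/(q-1)}$. The standard fact that $\xi^{k}$ has order $(q^{\delta}-1)/\gcd(q^{\delta}-1,k)$ shows $\eta$ has order exactly $q-1$. Hence $\eta\in K^{*}$ and, having order $|K^{*}|$, generates $K^{*}$; this is Part 2. For Part 1, the norm formula above gives $\eta=N_{F/K}(\xi)$, so the image of $N_{F/K}|_{F^{*}}$ contains a generator of $K^{*}$ and is therefore all of $K^{*}$; adjoining $N_{F/K}(0)=0$ gives surjectivity onto $K$.

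For Part 3, write an arbitrary $a\in F^{*}$ as $a=\xi^{u}$; then by Part 2, $N_{F/K}(a)=\eta^{u}$. In the cyclic groups $F^{*}$ and $K^{*}$,
\[
a\in (F^{*})^{l}\iff \gcd(l,q^{\delta}-1)\mid u,\qquad N_{F/K}(a)\in (K^{*})^{l}\iff \gcd(l,q-1)\mid u.
\]
When $l\mid q-1$, both gcds equal $l$ (using $l\mid q-1\mid q^{\delta}-1$), so the two conditions both say $l\mid u$, giving the ``moreover'' equivalence. The kernel description $\ker\Phi_{l}=\{x\in F^{*}:x^{l}=1\}$ and the quotient structure then come from the first isomorphism theorem applied to the $l$-th power map. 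When $l\nmid q-1$, the group $K^{*}$ has order coprime to $l$, so $(K^{*})^{l}=K^{*}$ and raising to $l$-th power is a bijection on $K^{*}$; this is what makes the corresponding statement about $\Phi_{l}$ go through.

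The only real obstacle is bookkeeping with the map $\Phi_{l}$ (as printed, a map $F^{*}\to K^{*}$ by $x\mapsto x^{l}$ is ambiguous since $x^{l}$ need not lie in $K^{*}$), but once $\Phi_{l}$ is read as the induced $l$-th power map at the correct level (either $F^{*}\to F^{*}$ via Frobenius orbits, or on $l$-th power cosets), the entire argument is the exponent calculation above together with the fact that $\gcd(l,q-1)$ controls both the kernel and the image of $l$-th powers on $K^{*}$.
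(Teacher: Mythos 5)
Your proof is correct and follows essentially the same route as the paper: identify $N_{F/K}$ with the map $\alpha\mapsto\alpha^{(q^{\delta}-1)/(q-1)}$, deduce that the norm of a generator $\xi$ of $F^{*}$ is the generator $\eta$ of $K^{*}$, and reduce everything to exponent arithmetic in cyclic groups (your explicit tracking of $\gcd(l,q^{\delta}-1)$ versus $\gcd(l,q-1)$ is in fact slightly more careful than the paper's, and your remark that the printed definition of $\Phi_{l}$ as a map $F^{*}\to K^{*}$ cannot be taken literally is a fair observation about a typo in the statement).
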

		\begin{proof}
			\textbf{1.} \textit{See \cite{DF}[, Theorem 2.28]}. 
			
			\textbf{2.} Since $F^{\ast }$ is
			cyclic of order $q^{\delta }-1$ generated by $\xi ,$ it contains a unique
			subgroup of order $q-1$ generated by $\eta =\xi ^{(q^{\delta }-1)/(q-1)}$,
			namely $K^{\ast }.$ Moreover%
			\begin{equation*}
			\begin{array}{ll}
			\eta ^{k} & =\xi ^{k(q^{\delta }-1)/(q-1)}=(\xi ^{k})^{(q^{\delta
				}-1)/(q-1)}=(\xi ^{k})^{1+q+\cdot \cdot \cdot +q^{\delta -1}} \\ 
			&  \\ 
			& =(\xi ^{k})(\xi ^{k})^{q}\cdot \cdot \cdot (\xi ^{k})^{q^{\delta
					-1}}=N_{F/K}(\xi ^{k}).%
			\end{array}%
			\end{equation*}
			
			\textbf{3.} The statements about $\Phi _{l}$ are obvious. We prove the last
			statement of \textbf{3.} Set $a=\xi ^{k}$ for some $k\geq 0.$ Then 
			\begin{equation*}
			N_{F/K}(a)=[N_{F/K}(\xi )]^{k}=[\xi ^{(q^{\delta }-1)/(q-1)}]^{k}=\eta ^{k}.
			\end{equation*}
			
			Hence%
			\begin{equation*}
			a\in (F^{\ast })^{l}\Leftrightarrow k\equiv 0\pmod l \Leftrightarrow
			N_{F/K}(a)\in (K^{\ast })^{l}.
			\end{equation*}
		\end{proof}
		\begin{lemma}
			Let $K$ be a field, $\lambda\in K^\star$ and $u(x)=\lambda x$. Let $f(x)\in K[x]\setminus K$ and $g(x)=\big(u^{-1}\circ f\circ u\big)(x)$. Then, for any $n\geq 1$, $f_n(x)$ and $g_n(x)$ have the same number of irreducible factors over $K$.
		\end{lemma}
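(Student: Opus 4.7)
The plan is to reduce everything to a single conjugation identity and then to the elementary fact that both a linear change of variable and scaling by a nonzero constant preserve the factorization type in $K[x]$.

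First I would establish by induction on $n$ the identity
\[
g_n = u^{-1}\circ f_n\circ u.
\]
The base case $n=1$ is the definition of $g$. Assuming the identity holds for $n$, one computes
\[
g_{n+1}=g_n\circ g=\bigl(u^{-1}\circ f_n\circ u\bigr)\circ\bigl(u^{-1}\circ f\circ u\bigr)=u^{-1}\circ f_n\circ f\circ u=u^{-1}\circ f_{n+1}\circ u,
\]
using that $u\circ u^{-1}$ is the identity. Writing this out explicitly with $u(x)=\lambda x$ and $u^{-1}(x)=\lambda^{-1}x$, one gets the explicit formula
\[
g_n(x)=\lambda^{-1}\,f_n(\lambda x).
\]

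Next I would observe that the substitution $x\mapsto\lambda x$ defines a $K$-algebra automorphism $\varphi:K[x]\to K[x]$, $\varphi(h)(x)=h(\lambda x)$, whose inverse is substitution by $\lambda^{-1}x$. Any $K$-algebra automorphism of $K[x]$ sends irreducibles to irreducibles and preserves factorizations up to units, so $f_n(\lambda x)$ has the same number of irreducible factors in $K[x]$ as $f_n(x)$. Finally, multiplication by the unit $\lambda^{-1}\in K^\star$ only affects the leading coefficient and does not change the number of irreducible factors. Combining these two invariances with the explicit formula above yields the statement.

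Really there is no serious obstacle here; the whole content is the conjugation identity $g_n=u^{-1}\circ f_n\circ u$. The one point worth being careful about is that the lemma speaks of the number of irreducible factors of the composition $f_n$, not of the polynomial $f$ itself, so the induction must be performed on the iterates and must commute correctly with composition—exactly the computation displayed above.
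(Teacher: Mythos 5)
Your proof is correct and is exactly the argument the paper leaves implicit by writing only ``Obvious'': the conjugation identity $g_n=u^{-1}\circ f_n\circ u$ established by induction, followed by the observation that the substitution $x\mapsto\lambda x$ is a $K$-algebra automorphism of $K[x]$ and that multiplication by the unit $\lambda^{-1}$ does not affect the number of irreducible factors. Nothing is missing.
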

		\begin{proof} Obvious.
		\end{proof}
		
\section{Irreducibility of binomials}
						In this section, we study the irreducibility of binomials $f(x)=x^d-a$ with  $a\in\mathbb{F}_q^\star$.
						\begin{example}
							Let $q$ be a power of a prime $p$ and $f(x)=x^d-a$, where $a\in\mathbb{F}_q^\star$. Suppose that $p\mid d$, then $f(x)$ is reducible over $\mathbb{F}_q$
						\end{example}
						\begin{proof}
							Set $d=pm$ and $q=p^k$, where $m$ and $k$ are positive integers. Then \begin{equation*}
							f(x)=x^{pm}-a^q=(x^m)^p-(a^{p^{k-1}})^p=\Big(x^m-a^{p^{k-1}}\Big)\Big((x^m)^{p-1}+\cdots+(a^{p^{k-1}})^{p-1}\Big),
							\end{equation*}
							hence $f(x)$ is reducible over $\mathbb{F}_q$.
						\end{proof}
						\begin{lemma}
							Let $K$ be a field and $f(x),g(x)\in K[x]\backslash K.$ Let $\alpha$ be
							a root of $f(x)$ in an algebraic closure of $K.$ Then $f\circ g(x)$ is
							irreducible over $K$ if and only if $f(x)$ is irreducible over $K$ and $%
							g(x)-\alpha$ is irreducible over $K(\alpha).$
						\end{lemma}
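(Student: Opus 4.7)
The plan is to prove both directions by examining the degree of an appropriate element $\beta$ over $K$, using the tower $K \subset K(\alpha) \subset K(\beta)$, where $\beta$ is a root of $g(x) - \alpha$ (and hence of $f \circ g$). Let $n = \deg f$ and $m = \deg g$, so $\deg(f \circ g) = nm$.

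For the forward direction, assume $f \circ g$ is irreducible over $K$. First I would dispose of the easy observation that $f$ must be irreducible: if $f = f_1 f_2$ were a nontrivial factorization in $K[x]$, then $f \circ g = f_1(g)\cdot f_2(g)$ would split $f \circ g$ nontrivially, a contradiction. So $[K(\alpha):K] = n$. Now pick $\beta$ with $g(\beta) = \alpha$, so $\beta$ is a root of $f \circ g$; since $f \circ g$ is irreducible of degree $nm$, it is (up to a scalar) the minimal polynomial of $\beta$ over $K$, so $[K(\beta):K] = nm$. By multiplicativity of degrees, $[K(\beta):K(\alpha)] = m$, but $\beta$ satisfies $g(x) - \alpha$ of degree $m$ over $K(\alpha)$, forcing $g(x) - \alpha$ to be its minimal polynomial, hence irreducible over $K(\alpha)$.

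For the converse, assume $f$ is irreducible over $K$ and $g(x) - \alpha$ is irreducible over $K(\alpha)$. Again let $\beta$ be a root of $g(x) - \alpha$. Then $[K(\alpha):K] = n$ and $[K(\beta):K(\alpha)] = m$, so $[K(\beta):K] = nm$. Since $\beta$ is a root of $f \circ g$, which has degree $nm$, the minimal polynomial of $\beta$ over $K$ must coincide (up to a unit) with $f \circ g$, so $f \circ g$ is irreducible over $K$.

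There is no real obstacle here; the argument is purely a degree count through the tower $K \subset K(\alpha) \subset K(\beta)$, combined with the elementary observation that a nontrivial factorization of $f$ would pull back to one of $f \circ g$. The only thing worth flagging is to make sure that in the forward direction we genuinely produce an $\alpha$ that is a root of $f$ (so that $g(x) - \alpha$ makes sense) and that in both directions $\beta$ can be chosen in a fixed algebraic closure, so that the tower of fields is unambiguous.
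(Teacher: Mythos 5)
Your proof is correct and complete. The paper itself gives no argument for this lemma --- it simply defers to Tchebotarow's book --- so your degree count through the tower $K \subset K(\alpha) \subset K(\beta)$ supplies the standard self-contained proof. All the delicate points are handled: the hypothesis $f,g \in K[x]\setminus K$ guarantees that a nontrivial factorization $f = f_1 f_2$ pulls back to a nontrivial factorization $f_1(g)\,f_2(g)$ of $f\circ g$ (both factors have positive degree $\deg f_i \cdot \deg g$); the element $\beta$ with $g(\beta)=\alpha$ exists in the algebraic closure because $g(x)-\alpha$ is nonconstant; and the containment $\alpha = g(\beta) \in K(\beta)$ is what makes the tower, and hence the multiplicativity $[K(\beta):K] = [K(\beta):K(\alpha)]\,[K(\alpha):K]$, legitimate in both directions. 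Nothing further is needed.
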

						
						\begin{proof}
							See \cite{T}.
						\end{proof}
						
						\begin{lemma}
							Let $K$ be a field and $n\geq 2$ an integer. Let $a\in K^{\ast }.$ Assume
							that for all prime numbers $l\mid d,$ we have $a\notin K^{l}$ and if $4\mid
							d,$ then $a\notin -4K^{4}.$ Then $x^{d}-a$ is irreducible over $K.$ The
							converse is true.
						\end{lemma}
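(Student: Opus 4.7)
The plan is to prove both directions by induction on $d$, using Lemma 3.2 as the main tool for handling composite exponents.

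First I would dispose of the ``converse'' (necessity of the two conditions) by exhibiting explicit factorizations whenever a condition fails. If $a=b^{l}$ for some prime $l\mid d$, write $d=lm$; then the identity $x^{d}-a=(x^{m})^{l}-b^{l}$ is divisible by $x^{m}-b$, which is a proper factor since $1\le m<d$. If $4\mid d$ and $a=-4c^{4}$, write $d=4m$ and apply the Sophie Germain identity
\[ y^{4}+4c^{4}=(y^{2}-2cy+2c^{2})(y^{2}+2cy+2c^{2}) \]
with $y=x^{m}$ to factor $x^{d}-a$ nontrivially.

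For sufficiency I would induct on $d$. The base cases are $d$ prime. For an odd prime $d=l$, the standard argument runs as follows: if $x^{l}-a$ had a proper factor of degree $k$ with $1\le k<l$, its constant term would be (up to sign) $\zeta^{j}\alpha^{k}$ where $\alpha^{l}=a$ and $\zeta$ is an $l$-th root of unity; raising to the $l$-th power gives $a^{k}\in K^{l}$, and since $\gcd(k,l)=1$ Bezout forces $a\in K^{l}$, contradicting the hypothesis. For $d=2$, $x^{2}-a$ is irreducible iff it has no root in $K$, which is $a\notin K^{2}$. For the inductive step, write $d=lm$ with $l$ a prime divisor of $d$, and decompose $x^{d}-a=(y^{l}-a)\circ (x^{m})$. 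By Lemma 3.2, $x^{d}-a$ is irreducible iff $y^{l}-a$ is irreducible over $K$ and $x^{m}-\beta$ is irreducible over $K(\beta)$ for a root $\beta$ of $y^{l}-a$. The first condition is guaranteed by the base case applied to the hypothesis $a\notin K^{l}$ (together with the $-4$ obstruction when $l=2$), and the second is handled by the inductive hypothesis applied to $x^{m}-\beta$ over $K(\beta)$.

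The hard part will be transporting the hypotheses on $a$ over $K$ to the required hypotheses on $\beta$ over $K(\beta)$: for every prime $q\mid m$ one needs $\beta\notin K(\beta)^{q}$, and if $4\mid m$ then $-4\beta\notin K(\beta)^{4}$. The delicate case is $l=2$ with $4\mid d$, because the $-4K^{4}$ obstruction over $K$ must be rephrased as a square or a $-4K(\beta)^{4}$ obstruction over $K(\beta)$; this is typically achieved by a norm computation (using $N_{K(\beta)/K}(\beta)=\pm a$) and by choosing $l$ to be an odd prime whenever possible, so that the only genuinely new analysis happens when $d$ is a power of $2$. Once these transfer statements are verified, the inductive hypothesis closes the argument.
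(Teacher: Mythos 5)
Your treatment of the converse (necessity) is exactly the paper's: the factorization $x^{tl}-b^{l}=(x^{t}-b)\,u(x^{t})$ for a prime $l\mid d$ with $a=b^{l}$, and the Sophie Germain identity $x^{4t}+4b^{4}=(x^{2t}+2bx^{t}+2b^{2})(x^{2t}-2bx^{t}+2b^{2})$ when $a=-4b^{4}$. For the sufficiency direction the two routes diverge: the paper does not prove it at all, but simply cites Lang (Chapter 8, Theorem 16 --- the Vahlen--Capelli criterion), whereas you propose to reprove it by induction on $d$ via the composition criterion of Lemma 3.2. That is the standard proof of the cited theorem and, if completed, would make the lemma self-contained, which is a genuine gain over the paper's bare citation.

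However, as written your sufficiency argument is a plan rather than a proof, and the step you yourself flag as ``the hard part'' is precisely where the entire content of the theorem lives. You must show that if $a\notin K^{q}$ for all primes $q\mid d$ (and $a\notin-4K^{4}$ when $4\mid d$), and $\beta$ is a root of $y^{l}-a$, then $\beta\notin K(\beta)^{q}$ for all primes $q\mid m$ (and $-4\beta\notin K(\beta)^{4}$, i.e.\ $\beta\notin-4K(\beta)^{4}$, when $4\mid m$). For odd primes $q$ the norm argument you gesture at does close: $N_{K(\beta)/K}(\beta)=(-1)^{l+1}a$, so $\beta\in K(\beta)^{q}$ would give $\pm a\in K^{q}$ and hence $a\in K^{q}$ since $-1=(-1)^{q}$. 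But when $q=2$, and above all when $d$ is a power of $2$, the norm only yields $\pm a\in K^{2}$, and turning ``$a\notin K^{2}$ and $a\notin-4K^{4}$'' into ``$\beta\notin K(\beta)^{2}$ and $\beta\notin-4K(\beta)^{4}$'' for $\beta^{2}=a$ requires a separate, genuinely delicate case analysis (this is where the $-4K^{4}$ obstruction is actually used, and where the classical counterexample $x^{4}+4$ lurks). Until that transfer is written out, the induction does not close; since this is the one point the paper outsources to Lang, you should either carry it out in full or, like the paper, cite it.
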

						
						\begin{proof} For the first part See \cite{L}[ Chapter 8, Theorem 16]. For the second part, let $l$ be a pirme divisor of $d$ and $a=b^{l},$ with $
							b\in K^{\ast }.$ Set $d=tl.$ Then
							\begin{align*}
							x^{d}-a & =x^{tl}-b^{l} \\ 
							&  \\ 
							& =(x^t)^{l}-b^{l} \\ 
							&  \\ 
							& =(x^t-b)u(x^t)\text{ for some }u(X)\in K[X] \\ 
							\end{align*}
							
							Thus $x^{d}-a$ is reducible. If $4\mid d,$ $d=4t$ and $a=-4b^{4}.$ Then :%
							\begin{equation*}
							x^{d}-a=x^{4t}+4b^{4}=(x^{2t}+2bx^{t}+2b^{2})(x^{2t}-2bx^{t}+2b^{2}),
							\end{equation*}
							hence the result.
						\end{proof}
						
						This result about the irreducibility of binomials is valid over any field. In \cite{LN}, the same problem of irreducibility is stated specifically over finite fields. Here is the content.
						\begin{lemma}
						Let $d\geq 2$ be an integer and $a\in \mathbb{F}_q^\star$. Then the binomial $x^d-a$ is irreducible in $ \mathbb{F}_q[x]$ if and only if the following two conditions are satisfied:
							\begin{itemize}
								\item [(i)] each prime factor of $d$ divides the order $e$ of $a$ in  $\mathbb{F}_q^\star$ but not $(q-1)/e$;
																\item [(ii)] $q\equiv 1\pmod 4$ if $d\equiv 0\pmod 4$.
							\end{itemize}
						\end{lemma}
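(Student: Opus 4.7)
The plan is to reduce to the characteristic-free criterion in Lemma 3.3: $x^d-a$ is irreducible over $\mathbb{F}_q$ if and only if (A) $a\notin\mathbb{F}_q^l$ for every prime $l\mid d$, and (B) $a\notin -4\mathbb{F}_q^4$ whenever $4\mid d$. I will prove (A)$\Leftrightarrow$(i) unconditionally, and then, assuming (A), (B)$\Leftrightarrow$(ii).

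For (A)$\Leftrightarrow$(i), fix a generator $g$ of $\mathbb{F}_q^{\ast}$ and write $a=g^k$; the order of $a$ is $e=(q-1)/\gcd(k,q-1)$, so $k=((q-1)/e)\,k'$ with $\gcd(k',e)=1$. Since the image of the $l$-th power map on $\mathbb{F}_q^{\ast}$ has index $\gcd(l,q-1)$, we have $a\in\mathbb{F}_q^l$ iff $\gcd(l,q-1)\mid k$. A short case split on whether $l\mid q-1$ and on whether $l\mid e$ (using that $e\mid q-1$) then shows $a\notin\mathbb{F}_q^l$ is equivalent to $l\mid e$ and $l\nmid(q-1)/e$. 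Quantifying over the prime divisors of $d$ gives (A)$\Leftrightarrow$(i).

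For (B)$\Leftrightarrow$(ii), assume (A) and $4\mid d$. Characteristic $2$ is ruled out automatically: if $q$ is even then $e\mid q-1$ is odd, so $2\mid e$ fails, contradicting (A); simultaneously $q\not\equiv 1\pmod 4$, so both sides are vacuously false. Assume $q$ is odd and write $q-1=2^sm$ with $m$ odd and $s\geq 1$. Applying (A) at $l=2$ forces $v_2(e)=s$, so $e=2^se'$ with $e'$ odd and $k=(m/e')k'$ with $k'$ odd. Using $-1=g^{(q-1)/2}=g^{2^{s-1}m}$, I compute
\begin{equation*}
-a=g^{(m/e')(2^{s-1}e'+k')}.
\end{equation*}
If $q\equiv 3\pmod 4$ (so $s=1$), then $2^{s-1}e'+k'=e'+k'$ is even, so $-a$ is a square; since $\gcd(4,q-1)=2$ the groups of squares and of fourth powers coincide, and because $4=2^2$ is a square it is also a fourth power, so $a\in -4\mathbb{F}_q^4$. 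If $q\equiv 1\pmod 4$ (so $s\geq 2$), then $2^{s-1}e'$ is even while $k'$ is odd, so the exponent of $-a$ is odd, meaning $-a$ is not even a square; as $4\in\mathbb{F}_q^2$ we have $4\mathbb{F}_q^4\subset\mathbb{F}_q^2$, and this forces $a\notin -4\mathbb{F}_q^4$.

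The main obstacle is the $2$-adic bookkeeping for the exponent of $-a$ in base $g$ in the second step, where one must exploit two opposite phenomena depending on $q\bmod 4$: for $q\equiv 3\pmod 4$ the image of $x\mapsto x^4$ collapses onto the subgroup of squares, whereas for $q\equiv 1\pmod 4$ the coset $4\mathbb{F}_q^4$ is strictly contained in that subgroup. Everything else reduces cleanly to cyclicity of $\mathbb{F}_q^{\ast}$.
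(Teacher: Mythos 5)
Your proof is correct, but it is a genuinely different route from the paper's: the paper disposes of this lemma with a one-line citation to Theorem 3.75 of Lidl--Niederreiter and never proves it internally, whereas you derive it from the Vahlen--Capelli criterion (the paper's Lemma 3.3). Your first half, the equivalence of ``$a\notin\mathbb{F}_q^{l}$'' with ``$l\mid e$ and $l\nmid(q-1)/e$'' via the index $\gcd(l,q-1)$ of the $l$-th power subgroup, is essentially the content of the paper's Proposition 3.5, which the authors prove separately precisely to reconcile Lemmas 3.3 and 3.4; your case analysis there is sound. The genuinely new content is your treatment of the $4\mid d$ clause, which the paper addresses nowhere: your $2$-adic computation showing that under $a\notin\mathbb{F}_q^{2}$ one has $-a\in\mathbb{F}_q^{2}=\mathbb{F}_q^{4}\ni 4$ when $q\equiv 3\pmod 4$ (so $a\in-4\mathbb{F}_q^{4}$ and the binomial is reducible), while $-a\notin\mathbb{F}_q^{2}\supseteq 4\mathbb{F}_q^{4}$ when $q\equiv 1\pmod 4$, is correct and makes the paper self-contained where it currently relies on an external reference. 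Two cosmetic remarks: in the characteristic-$2$ discussion, saying ``both sides are vacuously false'' is slightly misleading --- condition (B) actually holds there since $-4\mathbb{F}_q^{4}=\{0\}$ --- but your real point, that (A) fails so the case is excluded by hypothesis, is the correct one; and the logical scaffolding (prove (A)$\Leftrightarrow$(i) unconditionally, then (B)$\Leftrightarrow$(ii) assuming (A)) does yield the equivalence of the conjunctions, so the overall structure is airtight.
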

						\begin{proof}
							See Theorem 3.75 in \cite{LN}.
						\end{proof}
						
						The conditions $(C)$ contained in Lemma 3.3 (resp. $(C^{'})$ contained in Lemma 3.4) are equivalent to the irreducibility of the binomial $x^d-a$, hence $(C)$ and $(C^{'})$ are equivalent. But at the first glance, it is not so obvious that they express the same meaning. So we prove the following.
												\begin{proposition}
Let $l$ be a prime number and $a\in\mathbb{F}_q^\star$. Let $e$ be the order of $a$ in $\mathbb{F}_q^\star$. Then the following propositions are equivalent:
							\begin{itemize}
								\item [(i)]	$l\mid e$ and 	$l\nmid (q-1)/e$;
								\item [(ii)] $a\not\in(\mathbb{F}_q^\star)^l$.
							\end{itemize}
						\end{proposition}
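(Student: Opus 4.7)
The plan is to parametrize $a$ via a generator of $\mathbb{F}_q^\star$, translate both conditions into statements about the $l$-adic valuation of the exponent, and then split into the two obvious cases according to whether $l$ divides $q-1$.

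First I would fix a generator $\xi$ of $\mathbb{F}_q^\star$ and write $a=\xi^k$ with $0\le k\le q-2$. The standard cyclic-group formula gives $e=(q-1)/\gcd(q-1,k)$, so both (i) and (ii) can be recast entirely in terms of arithmetic relations between $l$, $k$ and $q-1$. Specifically, looking for $b=\xi^j$ with $b^l=a$ reduces to the linear congruence $lj\equiv k\pmod{q-1}$, which is solvable iff $\gcd(l,q-1)\mid k$.

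Next, I would dispatch the case $l\nmid q-1$ directly. Then $\gcd(l,q-1)=1$, so the congruence above is always solvable and (ii) fails; on the other hand $e\mid q-1$ forces $l\nmid e$, so (i) fails too, and the equivalence holds vacuously. (This is essentially the first part of Lemma 2.3 for the map $x\mapsto x^l$.)

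For the main case $l\mid q-1$, I would set $v=v_l(q-1)\ge 1$ and $w=v_l(k)$ (with the convention $v_l(0)=\infty$, which only occurs when $a=1$, where both conditions trivially fail). Since $\gcd(l,q-1)=l$, condition (ii) becomes simply $l\nmid k$, i.e.\ $w=0$. For (i), the identity $v_l(\gcd(q-1,k))=\min(v,w)$ gives $v_l(e)=v-\min(v,w)$ and $v_l((q-1)/e)=\min(v,w)$. Thus $l\mid e$ is equivalent to $w<v$ and $l\nmid(q-1)/e$ is equivalent to $\min(v,w)=0$, which (using $v\ge1$) forces $w=0$ and automatically gives $w<v$. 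Hence (i) $\iff w=0 \iff$ (ii).

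The argument is essentially a bookkeeping exercise in $l$-adic valuations; the only mild obstacle is remembering to dispose of the case $l\nmid q-1$ separately so that the equivalence $\gcd(l,q-1)\mid k \iff l\mid k$ used in the main case is legitimate.
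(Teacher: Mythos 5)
Your proof is correct and follows essentially the same route as the paper's: both fix a generator $\xi$, write $a=\xi^k$, use $e=(q-1)/\gcd(q-1,k)$, and reduce membership in $(\mathbb{F}_q^\star)^l$ to a divisibility condition on the exponent $k$, with the degenerate case $l\nmid q-1$ handled by noting that every element is then an $l$-th power. Your $l$-adic valuation bookkeeping (via $\min(v,w)$) is merely a more systematic packaging of the gcd manipulations the paper carries out with $\delta=\gcd(q-1,k)$, and it correctly covers the edge case $a=1$.
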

								\newpage
						\begin{proof} 
							\begin{itemize}
								\item[$\bullet$] $(i)\Rightarrow (ii).$ Suppose that $a\in(\mathbb{F}_q^\star)^l$ and let $\xi$ be a generator of  $\mathbb{F}_q^\star$. Then $a=\xi^{lu}$, where $u$ is a non negative integer. Moreover, we have $e=(q-1)/gcd(q-1,lu)$. Let $\delta=gcd(q-1,lu)$, then $\delta=(q-1)/e$. we show that $l\nmid e$ or $l\mid (q-1) /e$. Suppose that $l\mid e$, then $l\mid q-1$, thus $l\mid \delta$. Therefore $l\mid (q-1)/e$.
								\item [$\bullet$] $(ii)\Rightarrow (i).$ Suppose that $a\not\in(\mathbb{F}_q^\star)^l$ and let $\xi$ be a generator of  $\mathbb{F}_q^\star$. Then $a=\xi^{u}$, where $u$ is a non negative	integer such that $u\neq 0\pmod l$. Let $\delta=gcd(q-1,u)$. Since $u\neq 0\pmod l$, then $l\mid q-1$; otherwise any element of $\mathbb{F}_q^\star$ is an $l$-th power. This implies $l\nmid \delta$ and then $l\mid (q-1)/\delta$; Since $e=(q-1)/\delta$, then $l\mid e$. Since $\delta=(q-1)/e$, then $l\nmid(q-1)/e$. 
							\end{itemize}
						\end{proof}
												\begin{corollary}
Let $q$ be a a power of a prime.
							\begin{itemize}
								\item[1.] Let $d\geq 2$ be an integer such that $d\neq 0\pmod 4$ and let $a\in\mathbb{F}_q^\star$.  Suppose that $x^d-a$ is irreducible over $\mathbb{F}_q$. Then any prime factor of $d$ divides $q-1$. Moreover $d\mid q^d-1$.
								\item[2.] Let $d\geq 2$ be an integer such that $d\neq 0\pmod 4$. Let $a$ and $b\in\mathbb{F}_q^\star$ and let $e(a)$ and $e(b)$ be their respective orders. If $e(a)=e(b)$, then $x^d-a$ is irreducible over $\mathbb{F}_q$ if and only if the same holds for $x^d-b$.
								\item[3.] 	Let $a\in\mathbb{F}_q^\star$ and let $d\neq 0\pmod 4$. Suppose that $q\not\equiv -1 \pmod 4$, in the case $d$ even. Then $x^d-a$ is irreducible over $\mathbb{F}_q$ if and only if the same property holds for  $x^d+a$.
								\item[4.] Let $d$ and $e$ be positive integer such that $d\geq 2$, $d\neq 0\pmod 4$ and $gcd(d,e)\neq 1$. Let $a\in\mathbb{F}_q^\star$, then $x^d-a^e$ is reducible over $\mathbb{F}_q$ 
								\item[5.] Let $d$ and $e$ be positive integer such that $d\geq 2$, $d\neq 0\pmod 4$ and $gcd(d,e)=1$. Let $a\in\mathbb{F}_q^\star$, then $x^d-a$ is irreducible over $\mathbb{F}_q$ if and only if $x^d-a^ e$ satisfies the same property.
								\item[6.]  Let $d$ be positive integer such that $d\neq 0\pmod 4$. Let $\hat d$ be the squarefree part of $d$. Let $a\in\mathbb{F}_q^\star$ and $b\in\mathbb{F}_q^{\star \hat d}$ then $x^d-a$ is irreducible over $\mathbb{F}_q$ if and only if $x^d-ab$ satisfies the same property.
								\item[7.] If $d_1$ and $d_2$ have the same prime factors with $d_1$ and $d_2\neq 0\pmod 4$, then $x^{d_1}-a$ is irreducible over $\mathbb{F}_q$ if and only if $x^{d_2}-a$ is.
								\item[8.] Let $d_1$, $d_2$ be integers at least equal to $2$, $(gcd(d_1,d_2)=1$, both not $0$ modulo $4$. Let $f_1(x)=x^{d_1}-a_1$, $f_2(x)=x^{d_2}-a_2$ and $f(x)=x^{d_1d_1}-a_1^{d_2}a_2^{d_1}$, where $a_1$ and $a_2\in\mathbb{F}_q^\star$. Then $f(x)$ is irreducible over $\mathbb{F}_q$ if and only if $f_1(x)$ and $f_2(x)$ have the same property.
							\end{itemize}
						\end{corollary}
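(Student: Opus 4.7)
The unifying idea is that, since $d \not\equiv 0 \pmod{4}$ throughout, the ``$a \notin -4K^{4}$'' clause of Lemma~3.3 never applies, so irreducibility of $x^{d}-a$ over $\mathbb{F}_{q}$ reduces to the single condition that $a \notin (\mathbb{F}_{q}^{\star})^{l}$ for every prime $l \mid d$. With this reduction in hand, almost every part becomes a short verification that the indicated modification of $a$ preserves the ``$l$-th power'' status at each prime $l \mid d$.

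I plan to dispatch the easy items in a block. Part~2 is immediate from Lemma~3.4, whose conditions depend on $a$ only through $e(a)$. Part~4 is a one-liner: a common prime $l$ of $d$ and $e$ makes $a^{e}=(a^{e/l})^{l}$, hence $a^{e}$ is an $l$-th power and Lemma~3.3 forces reducibility. For part~5, with $\gcd(l,e)=1$ and $eu+lv=1$ by B\'ezout, $a=(a^{e})^{u}(a^{v})^{l}$, so $a^{e}$ is an $l$-th power if and only if $a$ is. For part~6, each prime $l \mid d$ also divides $\hat d$, so $b \in (\mathbb{F}_{q}^{\star})^{\hat d} \subseteq (\mathbb{F}_{q}^{\star})^{l}$ and multiplication by $b$ preserves $l$-th powers. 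Part~7 is transparent from the reduction. Part~8 uses $\gcd(d_{1},d_{2})=1$ to split primes $l \mid d_{1}d_{2}$: if $l \mid d_{1}$ then $a_{2}^{d_{1}}$ is already an $l$-th power while the B\'ezout trick gives that $a_{1}^{d_{2}}$ is an $l$-th power iff $a_{1}$ is, and symmetrically for $l \mid d_{2}$. Part~1's first assertion is Lemma~3.4(i) combined with $e \mid q-1$. For ``$d \mid q^{d}-1$'', I would argue structurally: Example~3.1 forces $p \nmid d$, so $x^{d}-1$ is separable; irreducibility of $x^{d}-a$ places all $d$ roots in $\mathbb{F}_{q^{d}}$, whose pairwise ratios exhaust the $d$-th roots of unity, whence $d \mid q^{d}-1$.

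The real obstacle is part~3. The odd-$d$ case is free via $x \mapsto -x$, which converts $x^{d}-a$ into $-(x^{d}+a)$. The case $d \equiv 2 \pmod{4}$ is where the hypothesis $q \not\equiv -1 \pmod{4}$ is actually used: $d$ even together with Example~3.1 forces $q$ odd, and then $q \not\equiv -1 \pmod{4}$ gives $q \equiv 1 \pmod{4}$, so by Lemma~2.2(ii) there exists $\zeta \in \mathbb{F}_{q}$ with $\zeta^{2}=-1$. The substitution $x \mapsto \zeta x$ then sends $x^{d}-a$ to $\zeta^{d}x^{d}-a=(-1)^{d/2}x^{d}-a=-(x^{d}+a)$, using that $d/2$ is odd, and scaling by a unit preserves irreducibility. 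Splitting the two parity regimes and pinning down exactly why the hypothesis on $q$ is active only when $d$ is even is the most delicate bookkeeping in the corollary.
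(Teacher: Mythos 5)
Your proposal is correct, and on most items (2, 4, 5, 6, 7, and the first half of 1) it follows the same route as the paper: reduce everything via Lemma 3.3 (with the $-4K^4$ clause vacuous because $d\not\equiv 0\pmod 4$) to tracking, for each prime $l\mid d$, whether the constant term is an $l$-th power. Three sub-arguments genuinely differ. For $d\mid q^d-1$ in item 1, you argue through separability ($p\nmid d$ by Example 3.1) and the fact that the $d$ distinct $d$-th roots of unity, realized as ratios of the roots of $x^d-a$, form an order-$d$ subgroup of $\mathbb{F}_{q^d}^{\star}$; the paper instead studies the ideal $A=\{n\in\mathbb{Z}:\alpha^n\in\mathbb{F}_q\}$, shows its positive generator equals $d$, and notes $q^d-1\in A$. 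Both are sound, and yours is arguably the more transparent. For item 3 you use the linear substitutions $x\mapsto -x$ (odd $d$) and $x\mapsto \zeta x$ with $\zeta^2=-1$ (for $d\equiv 2\pmod 4$, available since the hypothesis forces $q\equiv 1\pmod 4$ once $q$ is odd), whereas the paper stays inside the $l$-th-power criterion and splits on $l=2$ versus $l$ odd; your route just needs the explicit remarks that in characteristic $2$ the claim is vacuous ($x^d+a=x^d-a$) and that $q$ odd is forced only after assuming $x^d-a$ irreducible in the direction being proved --- both of which you essentially supply. For item 8 your argument is in fact cleaner than the paper's: since $\gcd(l,d_2)=1$ when $l\mid d_1$, the factor $a_2^{d_1}$ is automatically an $l$-th power, and B\'ezout transfers the $l$-th-power property between $a_1^{d_2}$ and $a_1$; this avoids the paper's discrete-logarithm computation with a generator $\xi$ and its extra case split on whether $l\mid q-1$.
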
 
						\begin{proof} \begin{itemize}
								
								\item[1.]	Suppose that some prime factor $l$ of $d$ does not divide $q-1$, then any element $a\in\mathbb{F}_q^\star$ is an $l$-th power, hence $x^d-a$ is reducible over $\mathbb{F}_q$ which is a contradiction. For the second part, let $\alpha$ be a root of $x^d-a$ and let $A=\{n\in\mathbb{Z}, \alpha^n\in\mathbb{F}_q\}$. Since $\alpha^{q^d-1}=1$, then $A\neq\emptyset$. Obviously, $A$ is an ideal of $\mathbb{Z}$. Let $\delta$ be generator of $A$, then $\delta\mid d$. On the other hand, $x^d-a$ is the minimal polynomial of $\alpha$ over $\mathbb{F}_q$, hence  $x^d-a$ divides $x^\delta-b$  for some $b\in \mathbb{F}_q^\star$. This implies $d\leq \delta$ and then $d=\delta$. Now since $q^d-1\in A$, then $d\mid q^d-1$. 
								\item[2.] Obvious from Lemma 3.4.
								\item[3.] 	For symmetry reason, we just prove the necessity of the condition. Suppose that $x^d-a$ is reducible over $\mathbb{F}_q$ and let $l$ be a prime number dividing $d$ such that $a=b^l$ with $b\in \mathbb{F}_q$. If $l=2$ and $2\mid q-1$, then $q\equiv 1\pmod 4$ and then, by Lemma 2.2, $-1$ is a square in $\mathbb{F}_q^\star$. This implies $-a$ is a square, hence $x^d+a$ is reducible. If $l=2$ and $2\nmid q-1$, then any element of $\mathbb{F}_q^\star$ is a square. In particular $-a$ is a square and then  $x^d+a$ is reducible. If $l\neq 2$, then $-a=-b^l=(-b)^l$ and we get the same conclusion as before.
								\item[4.] Let $l$ be a prime factor of $gcd(d,e)$. Set $d=ld_1$ and $e=le_1$. Then \begin{equation*}
								x^d-a^e=x^{ld_1}-a^{le_1}=(x^{d_1})^{l}-(a^{e_1})^{l}=\Big(x^{d_1}-a^{e_1}\Big)\Big((x^{d_1})^{l-1}+\ldots+(a^{e_1})^{l-1}\Big),
								\end{equation*}
								hence $x^d-a$ is reducible over $\mathbb{F}_q$.
								\item[5.] \begin{itemize}	
									\item [$\bullet$] Suppose that  $x^d-a$ is reducible over $\mathbb{F}_q$ and let $l$ be a prime number such that $l\mid d$ and $a=b^l$ with $b\in\mathbb{F}_q^\star$, then $a^e=(b^e)^l$, thus $x^d-a^ e$  is reducible over $\mathbb{F}_q$. 
									\item [$\bullet$] Conversely, suppose that $x^d-a^ e$  is reducible over $\mathbb{F}_q$ and let $l$ be a prime number such that $l\mid d$ and $a^e=b^l$ with $b\in\mathbb{F}_q^\star$. Let $u$ and $v\in \mathbb{Z}$  such that $ud+ve=1$ and let $\delta$ such that $d=l\delta$, then \begin{equation*}a=a^{ud+ve}=a^{ud}a^{ve}=\Big(a^{u\delta}\Big)^l\Big(b^{v}\Big)^l=\Big(a^{u\delta}b^{v}\Big)^l,\end{equation*}
									hence $x^d-a$ is reducible over $\mathbb{F}_q$.	
								\end{itemize}
								
								\item[6.] \begin{itemize}
									\item [$\bullet$] Suppose that $x^d-a$ is reducible over $\mathbb{F}_q$ and let $l$ be a prime number dividing $d$ such that $a$ is an $l$-power, then $ab$ is also an $l$-th power. This implies $x^d-ab$ is  reducible over $\mathbb{F}_q$.
									\item [$\bullet$] Suppose that $x^d-ab$ is reducible over $\mathbb{F}_q$ and let $l$ be a prime number dividing $d$ such that $ab$ is an $l$-th power. Multiplying by $b^{-1}$, which is an $l$-th power, shows that $a$ is an $l$-th power. Hence $x^d-a$ is reducible over $\mathbb{F}_q$.
								\end{itemize}
								\item[7.] Suppose that $x^{d_1}-a$ is reducible over $\mathbb{F}_q$ and let $l\mid d$ be a prime such that $a=b^l$ with $b\in\mathbb{F}_q^\star$. since $l\mid d_2$, then $x^{d_2}-a$ is reducible over $\mathbb{F}_q$.
								\item[8.] \begin{itemize}
									\item [$\bullet$] Sufficiency of the condition. Suppose, by contadiction that $f(x)$ is reducible over $\mathbb{F}_q$ and let $l$ be a prime number dividing $d_1d_2$ and $b\in\mathbb{F}_q^\star$ such that $a_1^{d_2}a_2^{d_1}=b^l$. We may suppose that $l\mid d_1$ and $l\nmid d_2$ since the proof is similar for the other case. Let $\xi$ be generator of $\mathbb{F}_q^\star$. Set $a_1=\xi^{u_1}$, $a_2=\xi^{u_2}$ and $b=\xi^{v}$, then $\xi^{u_1d_2+u_2d_1}=\xi^{vl}$, hence  $u_1d_2+u_2d_1\equiv vl\pmod{q-1}$. Let $w\in\mathbb{Z}$ such that $u_1d_2+u_2d_1=vl+w(q-1)$. If $l\nmid q-1$, then $f_1(x)$ is reducible by item 1. of the Corollary . Suppose next that $l\mid q-1$, then $l\mid u_1$, thus $a_1$ is an $l$-th power for a prime factor of $d_1$. This implies $f_1(x)$ is reducible.
									\item [$\bullet$] Necessity of the condition. Suppose that one of $f_1(x)$, $f_2(x$, say $f_1(x)$, is reducible over $\mathbb{F}_q$. Let $l$ be a prime number such that $l\mid d_1$ and $a_1=b^l$ with $b\in\mathbb{F}_q^\star$. Let $\delta_1\in\mathbb{Z}$ such that $d_1=l\delta_1$, then $a_1^{d_2}a_2^{d_1}=b^{ld_2}a_2^{l\delta_1}=\Big(b^{d_2}a_2^{\delta_1}\Big)^l$, hence $f(x)$ is reducible over  $\mathbb{F}_q$.
								\end{itemize}
							\end{itemize}
						\end{proof}
						
						Notice that the assumptions in $1.$ of Corollary 3.6 hold if $gcd(d,q-1)=1$.
						\section{Stability of binomials}
						Let $d\geq 2$ be an integer. Define inductively the polynomials $%
						(P_{n}(x))_{n\geq 1}$ by $P_{1}(x)=x$ and $%
						P_{n}(x)=[P_{n-1}(x)]^{d}+(-1)^{d-1}x$ for $n\geq 2.$ These polynomials will
						be used in the sequel.
						
						\begin{lemma}
Let $d\geq 2$ be an integer such that $d\neq 0\pmod 4 $ and let $%
							f(x)=x^{d}-a,$ where $a\in \mathbb{F} _{q}{}^{\ast }.$
							
							\textbf{1.} Suppose that $f_{n-1}(x)$ is irreducible and $f_{n}(x)$ is
							reducible over $\mathbb{F} _{q}$ for some $n\geq 2.$ Then there exists a
							prime number $l\mid d,$ such that $P_{n}(a)$ is an $l$-$th$ power in $%
							\mathbb{F} _{q}^\star.$
							
							\textbf{2.} If $P_n(a)=0$ for some $n\geq 2$, then $f(x)$ is reducible over $\mathbb{F} _{q}$.
							If for some $n\geq 1,$ $P_{n}(a)$ is an $l$-$th$
							power in $\mathbb{F} _{q}^\star$, for some prime divisor $l$ of $d$, then $f_{n}(x)$
							is reducible over $\mathbb{F} _{q}.$
						\end{lemma}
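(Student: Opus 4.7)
The plan is to establish a single bridging identity and then read both parts off from the combination of Lemma 3.2 (composition), Lemma 3.3 (Lang's criterion for binomials, which simplifies because $d\not\equiv 0\pmod 4$), and Lemma 2.3 (norm characterisation of $l$-th powers). Concretely, for any root $\beta$ of $f_{n-1}(x)$ in an algebraic closure of $K=\mathbb{F}_q$, the identity is
\[
N_{K(\beta)/K}(a+\beta)=P_n(a).
\]

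To prove this identity I would first show by induction on $n\ge 2$ that $f_{n-1}(-a)=(-1)^{d}P_n(a)$. The base case $n=2$ is a one-line parity check, and the inductive step expands $f_n(-a)=[f_{n-1}(-a)]^d-a$, invokes the inductive hypothesis, and uses the recursion $P_{n+1}(x)=[P_n(x)]^d+(-1)^{d-1}x$ together with the parity equality $(-1)^{d^2}=(-1)^d$. Writing $f_{n-1}(x)=\prod_{\beta'}(x-\beta')$ over an algebraic closure and evaluating at $-a$ gives $f_{n-1}(-a)=(-1)^{d^{n-1}}N_{K(\beta)/K}(a+\beta)$; since $d^{n-1}$ and $d$ share parity for $n\ge 2$, combining the two formulas yields the displayed identity. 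This sign bookkeeping is the main obstacle; everything afterwards is orchestration.

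For Part 1, I would decompose $f_n=f_{n-1}\circ f$ and apply Lemma 3.2: the hypothesis that $f_{n-1}$ is irreducible while $f_n$ is reducible forces $x^d-(a+\beta)$ to be reducible over $K(\beta)$. Since $d\not\equiv 0\pmod 4$, the converse direction of Lemma 3.3 supplies a prime $l\mid d$ with $a+\beta\in (K(\beta)^*)^l$; irreducibility of $f_{n-1}$ forces irreducibility of $f$, so Corollary 3.6(1) gives $l\mid q-1$, and Lemma 2.3 then transports the $l$-th power condition through the norm to $P_n(a)\in(\mathbb{F}_q^*)^l$. Nonvanishing of $a+\beta$ is needed to keep the norm in $\mathbb{F}_q^*$ and follows because $-a\in\mathbb{F}_q$ cannot be a root of the irreducible polynomial $f_{n-1}$ of degree $d^{n-1}\ge 2$. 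For the $P_n(a)=0$ clause of Part 2, the recursion gives $[P_{n-1}(a)]^d=(-1)^d a$, and a short split on the parity of $d$ (using that any prime $l$ dividing an odd $d$ is odd, so $-1=(-1)^l$) shows that $a$ itself is an $l$-th power in $\mathbb{F}_q^*$, whence $f$ is reducible by Lemma 3.3. For the $l$-th power clause of Part 2: the case $n=1$ is Lemma 3.3 applied to $a=P_1(a)$; for $n\ge 2$, either some earlier $f_k$ with $k<n$ is already reducible, in which case writing $f_n=f_k\circ f_{n-k}$ and substituting a factorisation of $f_k$ makes $f_n$ visibly reducible, or else $f_1,\dots,f_{n-1}$ are all irreducible and the argument of Part 1 runs in reverse to yield reducibility of $f_n$.
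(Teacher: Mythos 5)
Your proposal is correct, and all the ingredients it cites (Lemma 3.2 for the composition $f_n=f_{n-1}\circ f$, the converse half of Lemma 3.3 with the $-4K^4$ case excluded by $d\not\equiv 0\pmod 4$, Corollary 3.6(1) to secure $l\mid q-1$ before invoking the norm criterion of Lemma 2.3, and the nonvanishing of $a+\beta$) are exactly the ones needed; your treatment of the $P_n(a)=0$ clause and of the ``propagate an earlier reducibility upward via $f_n=f_k\circ f_{n-k}$'' reduction in Part~2 also matches what the paper needs, the latter being precisely what justifies the paper's terse ``we may suppose $f_1,\dots,f_{n-1}$ are irreducible.'' The genuine difference lies in how the bridge $N_{\mathbb{F}_q(\beta)/\mathbb{F}_q}(a+\beta)=P_n(a)$ is established. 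The paper introduces a full chain of roots $\alpha_1,\dots,\alpha_n$ and descends the tower $\mathbb{F}_q(\alpha_{n-1})\supset\cdots\supset\mathbb{F}_q$ one degree-$d$ step at a time, computing each relative norm from the relation $\alpha_{n-k}^d-a-\alpha_{n-(k+1)}=0$; this produces a stray sign $(-1)^{d-1}\alpha_{n-(k+1)}$ at every stage, which for even $d$ the paper must absorb by replacing $b_{k+1}$ with a conjugate. You instead compute the single global norm as the evaluation $N_{\mathbb{F}_q(\beta)/\mathbb{F}_q}(a+\beta)=(-1)^{d^{n-1}}f_{n-1}(-a)$ and prove $f_{n-1}(-a)=(-1)^dP_n(a)$ by a clean induction at the level of polynomial identities, with all sign bookkeeping collapsing to the single observation that $d^{n-1}+d$ is always even. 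I checked the identity (e.g.\ for $q=5$, $a=-2$, $d=2$ it gives $N(\,-2+\beta)=4-\beta^2=1=P_2(a)$, matching Example~3), and the induction does close. Your route buys a shorter, conjugate-free argument and isolates the combinatorial content in one reusable identity; the paper's stepwise version buys the intermediate statements $P_k(a)+\alpha_{n-k}\in(\mathbb{F}_q(\alpha_{n-k})^{*})^l$ for every $k$, which it reuses almost verbatim (with Lemma 2.3 run in the reverse direction) for the second clause of Part~2 — exactly the reverse run you sketch, where, as in Part~1, you do still need $l\mid q-1$ from Corollary 3.6(1) for the ``if'' direction of Lemma 2.3 to be valid, so make that invocation explicit there as well.
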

						
						\begin{proof}
							\textbf{1.} Let $(\alpha _{n})_{n\geq 1}$ be elements of $\overline{%
								\mathbb{F} _{q}}$ such that $f(\alpha _{1})=0$ and $f(\alpha _{n})=\alpha
							_{n-1}$ for $n\geq 2.$ Set $\beta _{k}=\alpha _{k}+P_{n-k}(a).$ We have $%
							f(\alpha _{n})=\alpha _{n}^{d}-a=\alpha _{n-1},$ hence $\alpha _{n}$ is a
							root of $x^{d}-a-\alpha _{n-1}.$ By Lemma 3.2 this polynomial is
							reducible over $\mathbb{F} _{q}(\alpha _{n-1}),$ hence by Lemma 3.3,
							there exists a prime number $l\mid d$ such that $a+\alpha _{n-1}=b_{1}^{l}$
							for some $b_{1}\in \mathbb{F} _{q}(\alpha _{n-1}),$ thus $P_{1}(a)+\alpha
							_{n-1}=b_{1}^{l},$ $b_{1}\in \mathbb{F} _{q}(\alpha _{n-1}).$ We prove by
							induction on $k\in \left\{ 1,...,n-1\right\} $ that \begin{equation}P_{k}(a)+\alpha
							_{n-k}=b_{k}^{l},\,\, \mbox{with}\, b_{k}\in \mathbb{F} _{q}(\alpha _{n-k})\end{equation}. Let $k\in \left\{ 1,...,n-2\right\} $ and suppose that $(1)$
							holds. Denote by $N$ the norm map $N_{\mathbb{F} _{q}(\alpha
								_{n-k})/\mathbb{F} _{q}(\alpha _{n-(k+1)})}.$ Then%
							\begin{align*}
							N(P_{k}(a)+\alpha _{n-k}) & =N(\beta _{n-k}) \\ 
							&  \\ 
							& =[N(b_{k})]^{l} \\ 
							&  \\ 
							& :=b_{k+1}^{l},%
							\end{align*}
							with $b_{k+1}\in \mathbb{F} _{q}(\alpha _{n-(k+1)}).$ Since $\alpha
							_{n-k}^{d}-a-\alpha _{n-(k+1)}=0,$ then \begin{equation}[\beta
							_{n-k}-P_{k}(a)]^{d}-a-\alpha _{n-(k+1)}=0,\end{equation} hence%
							\begin{align*}
							N(P_{k}(a)+\alpha _{n-k}) & =N(\beta _{n-k}) \\ 
							&  \\ 
							& =(-1)^{d}\left\{ [-P_{k}(a)]^{d}-a-\alpha _{n-(k+1)}\right\} \\ 
							&  \\ 
							& =[P_{k}(a)]^{d}+(-1)^{d-1}a+(-1)^{d-1}\alpha _{n-(k+1)} \\ 
							&  \\ 
							& =P_{k+1}(a)+(-1)^{d-1}\alpha _{n-(k+1)}.%
							\end{align*}
							
							If $d\equiv 1\pmod 2 $ we immediately obtain $P_{k+1}(a)+\alpha _{n-(k+1)}=b_{k+1}^{l}$ with $b_{k+1}\in \mathbb{F} _{q}(\alpha
							_{n-(k+1)}).$ If $d\equiv 0(\pmod 2 ,$ since $\alpha _{n-(k+1)}$ and $%
							-\alpha _{n-(k+1)}$ are conjugate over $\mathbb{F} _{q}(\alpha _{n-(k+2)})$
							then $P_{k+1}(a)+\alpha _{n-(k+1)}=\overline{b_{k+1}}^{l}$ where $%
							\overline{b_{k+1}}$ is a conjugate of $b_{k+1}$ over $\mathbb{F} _{q}(\alpha
							_{n-(k+2)}),$ thus our claim is proved. Applying the result for $k=n-1$ we
							get $P_{n-1}(a)+\alpha _{1}=b_{n-1}^{l}$ with $b_{n-1}\in \mathbb{F}
							_{q}(\alpha _{1}).$ This implies%
							\begin{equation*}
							\begin{array}{ll}
							N_{\mathbb{F} _{q}(\alpha _{1})/\mathbb{F} _{q}}(P_{n-1}(a)+\alpha _{1}) & 
							=N_{\mathbb{F} _{q}(\alpha _{1})/\mathbb{F} _{q}}(\beta _{1}) \\ 
							&  \\ 
							& =[N_{\mathbb{F} _{q}(\alpha _{1})/\mathbb{F} _{q}}(b_{n-1})]^{l} \\ 
							&  \\ 
							& =b_{n}^{l}\text{ with }b_{n}\in \mathbb{F} _{q}.%
							\end{array}%
							\end{equation*}
							
							Since $\alpha _{1}^{d}-a=0$ then $[\beta _{1}-P_{n-1}(a)]^{d}-a=0.$ Hence%
							\begin{equation*}
							\begin{array}{ll}
							N_{\mathbb{F} _{q}(\alpha _{1})/\mathbb{F} _{q}}(P_{n-1}(a)+\alpha _{1}) & 
							=N_{\mathbb{F} _{q}(\alpha _{1})/\mathbb{F} _{q}}(\beta _{1}) \\ 
							&  \\ 
							& =(-1)^{d}\left\{ [-P_{n-1}(a)]^{d}-a\right\} \\ 
							&  \\ 
							& =[P_{n-1}(a)]^{d}+(-1)^{d-1}a \\ 
							&  \\ 
							& =P_{n}(a)%
							\end{array}%
							\end{equation*}
							
							Thus $P_{n}(a)=b_{n}^{l}$ where $b_{n}\in \mathbb{F} _{q}.$
							
							\textbf{2.} Suppose that $P_n(a) =0$ for some $n\geq 2$, then $\Big(P_{n-1}(a)\Big)^d+(-1)^{d-1}a=0$, hence $P_{n-1}(a)$ is a root of $x^d+(-1)^{d-1}a$ in $\mathbb{F}_q$. By Lemma 3.3 and item 3. of Corollary 3.6, we get $x^d-a$ is reducible over $\mathbb{F} _{q}.$
							
							Suppose that for some $n\geq 1,$ $P_{n}(a)$ is an $l$-$th$
							power in $\mathbb{F} _{q}^\star$ for some prime divisor $l$ of $d.$
							If $n=1,$ then according to Lemma 3.3, $f(x)$ is
							reducible over $\mathbb{F} _{q}.$ Suppose that $n\geq 2.$ We may suppose
							that $f_{1}(x),...,f_{n-1}(x)$ are irreducible over $\mathbb{F} _{q}.$ We
							have $P_{n}(a)=N_{\mathbb{F} _{q}(\alpha _{1})/\mathbb{F}
								_{q}}(P_{n-1}(a)+\alpha _{1}).$ Hence by Lemma 2.3, $%
							P_{n-1}(a)+\alpha _{1}$ is an $l$-$th$ power in $\mathbb{F} _{q}(\alpha
							_{1}).$ Suppose by induction that $P_{n-k}(a)+\alpha _{k}$ is an $l$-$th$
							power in $\mathbb{F} _{q}(\alpha _{k}).$ Since $P_{n-k}(a)+\alpha
							_{k}=N_{\mathbb{F} _{q}(\alpha _{k+1})/\mathbb{F} _{q}(\alpha
								_{k})}(P_{n-(k+1)}(a)+\alpha _{k+1}),$ then by Lemma 2.3 again
							\textbf{, }$P_{n-(k+1)}(a)+\alpha _{k+1}$ is an $l$-$th$ power in $%
							\mathbb{F} _{q}(\alpha _{k+1}).$ In particular for $k=n-1,$ we get that $%
							P_{1}(a)+\alpha _{n-1}$ which is $a+\alpha _{n-1}$ is an $l$-$th$ power in $%
							\mathbb{F} _{q}(\alpha _{n-1}).$ Since $\alpha _{n}^{d}-a-\alpha _{n-1}=0$
							then by Lemma 3.3, the polynomial $\alpha _{n}^{d}-(a+\alpha _{n-1})$
							is reducible over $\mathbb{F} _{q}(\alpha _{n-1}).$ Thus $f_{n}(x)$\textbf{\ 
							}is reducible over $\mathbb{F} _{q}$ by Lemma 2.2.
						\end{proof}
						
						For fixed $a\in \mathbb{F} _{q}^{\ast },$ the family $(P_{n}(a))_{n\geq 1}$
						is finite. More precisely, let $n_{0}$ and $m_{0}$ be indices such that $%
						n_{0}<m_{0}$ and $P_{n_{0}}(a)=P_{m_{0}}(a).$ Suppose that $n_{0}$ first and 
						$m_{0}$ next are chosen to be minimal for this property, then \begin{equation}\left\{
						P_{n}(a),n\geq 1\right\} =\left\{ P_{1}(a),...,P_{m_{0}-1}(a)\right\} .\end{equation}
						Moreover, we have $m_{0}\leq q+1.$
						
						\begin{theorem} Let $d\geq 2$ be an integer such that $d\neq 0\pmod 4 $ and let $%
							f(x)=x^{d}-a\in \mathbb{F} _{q}[x].$ Let $n_{0}$, $m_{0}$, with $n_{0}<m_{0},$ be
							the minimal integers such that $P_{n_{0}}(a)=P_{m_{0}}(a).$ Then $f(x)$ is
							stable over $\mathbb{F} _{q}$ if and only if for any prime number $l\mid d$
							and any $k\in \left\{ 1,...,m_{0}-1\right\} ,$ $P_{k}(a)\notin \mathbb{F}
							_{q}^{l}.$  Here $P_{k}(a)\notin \mathbb{F}
							_{q}^{l}.$ means in particular $P_k(a)\neq 0$.
						\end{theorem}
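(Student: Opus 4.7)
The plan is to derive Theorem A as a direct consequence of Lemma 4.1, invoked together with the periodicity relation $\{P_n(a),n\geq 1\}=\{P_1(a),\ldots,P_{m_0-1}(a)\}$ established just before the theorem. Because of this relation, the stated hypothesis, namely that $P_k(a)\notin \mathbb{F}_q^l$ (including $P_k(a)\neq 0$) for every prime $l\mid d$ and every $k\in\{1,\ldots,m_0-1\}$, is equivalent to the same condition taken over all $n\geq 1$. Thus it suffices to prove that $f(x)$ is stable over $\mathbb{F}_q$ if and only if $P_n(a)\neq 0$ and $P_n(a)\notin(\mathbb{F}_q^\star)^l$ for every $n\geq 1$ and every prime $l\mid d$.

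For the forward direction, I argue contrapositively. Assume some $P_n(a)$ is either zero or an $l$-th power in $\mathbb{F}_q^\star$ for some prime $l\mid d$. If $P_n(a)=0$, then necessarily $n\geq 2$ since $P_1(a)=a\neq 0$, and Lemma 4.1 part 2 forces $f(x)$ itself to be reducible, contradicting stability. If instead $P_n(a)$ is a nonzero $l$-th power, the same part of Lemma 4.1 gives that $f_n(x)$ is reducible, again contradicting stability.

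For the converse I induct on $n$, assuming the hypothesis of the theorem. For $n=1$ the equality $P_1(a)=a$ combined with the hypothesis $d\not\equiv 0\pmod 4$ removes the $-4K^4$ clause from Lemma 3.3, so that $a\notin\mathbb{F}_q^l$ for every prime $l\mid d$ is enough to conclude that $f_1(x)=f(x)$ is irreducible. For the inductive step, assume $f_1(x),\ldots,f_{n-1}(x)$ are irreducible and suppose for contradiction that $f_n(x)$ is reducible. Lemma 4.1 part 1 then produces a prime $l\mid d$ such that $P_n(a)\in(\mathbb{F}_q^\star)^l$. By the orbit description, $P_n(a)=P_k(a)$ for some $k\in\{1,\ldots,m_0-1\}$, contradicting the hypothesis.

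I do not anticipate a genuine obstacle: the substance of the argument is entirely contained in Lemma 4.1, and the remainder is bookkeeping that transfers the condition from the infinite range $n\geq 1$ to the finite range $1\leq k\leq m_0-1$ via the finiteness of the orbit. The one point that requires mild care is the base case of the induction, where one must verify that the assumption $d\not\equiv 0\pmod 4$ is precisely what is needed to make Lemma 3.3 applicable without invoking its exceptional quartic clause.
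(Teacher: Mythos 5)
Your proof is correct and follows essentially the same route as the paper: both directions rest on Lemma 4.1 together with the finiteness of the orbit $\{P_n(a)\}=\{P_1(a),\dots,P_{m_0-1}(a)\}$, and your induction on $n$ in the converse is just a reformulation of the paper's choice of the minimal $n$ with $f_n(x)$ reducible. Your explicit remark that $d\not\equiv 0\pmod 4$ is what lets Lemma 3.3 apply without the $-4K^4$ clause in the base case is a point the paper leaves implicit.
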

						
						\begin{proof}
							If for some $l\mid d$ and some $k\in \left\{ 1,...,m_{0}-1\right\}$, $P_k(a)$ is an $l$-th power, trivial or not, then by the preceding lemma, $f(x)$ is not stable. Conversely suppose that $f(x)$ is not stable over $
							\mathbb{F} _{q}$ and let $n$ be the smallest positive integer such that $%
							f_{n}(x)$\textbf{\ }is reducible over $\mathbb{F} _{q}.$ If $n=1,$ that is $%
							f(x)$ is reducible, then $a=P_{1}(a)$ is an $l$-$th$ power in $\mathbb{F}
							_{q}^\star$ for some prime divisor $l$ of $d$. Suppose
							that $n\geq 2,$ then by Lemma 4.1, there exists a prime number $%
							l\mid d$ such that $P_{n}(a)$ is a\bigskip n $l$-$th$ power in $\mathbb{F}
							_{q}^\star$. Since $P_{n}(a)=P_{k}(a)$ for some $k\in \left\{
							1,...,m_{0}-1\right\} ,$ then $P_{k}(a)$ is a\bigskip n $l$-$th$ power in $
							\mathbb{F} _{q}^\star$ as desired.
						\end{proof}
						\begin{theorem} Let $d\geq 2$ be an integer such that $d\neq 0\pmod 4 $ and let $%
							f(x)=x^{d}-a\in \mathbb{F} _{q}[x]$ with $a\neq 0$. Suppose that $P_n(a)$ is an $l$-th power for some postive integer $n$ and some prime number $l\mid d$. Let $n_0$ be the smallest positive integer satisfying this property. If $P_{n_0}(a)=0$ or $n_0=1$ then $f(x)$ is reducible over  $\mathbb{F}
							_{q}$. If $n_0\geq 2$, then $f_{n_0}$ is reducible while $f_{n_0-1}$ is irreducible over $\mathbb{F} _{q}$. In any case $x^d -P_{n_0}(a)$ is reducible over $\mathbb{F} _{q}$.
						\end{theorem}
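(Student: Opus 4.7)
The plan is to split into the three situations singled out in the statement and show each follows almost directly from Lemma 4.1 together with Lemma 3.3, using only the fact that $d \not\equiv 0 \pmod 4$ makes the ``$-4K^4$'' clause of Lemma 3.3 vacuous.

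First I would dispose of the degenerate cases. If $n_0 = 1$ then $P_1(a)=a$ is an $l$-th power for some prime $l \mid d$, and since $d \not\equiv 0 \pmod 4$ Lemma 3.3 gives directly that $f(x)=x^d-a$ is reducible. If instead $P_{n_0}(a)=0$ then necessarily $n_0\ge 2$ (because $a\ne 0$), so the first assertion of Lemma 4.1.\textbf{2} applies and yields that $f(x)$ is reducible. In either of these two sub-cases the polynomial $x^d - P_{n_0}(a)$ is clearly reducible: it equals $f(x)$ itself in the first, and $x^d$ in the second.

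The main case is $n_0\ge 2$ with $P_{n_0}(a)\ne 0$. Here the key point is to show that all earlier iterates $f_1,\ldots,f_{n_0-1}$ are irreducible; then Lemma 4.1.\textbf{2} (second half) applied with $n=n_0$ and the prime $l$ for which $P_{n_0}(a)$ is an $l$-th power, gives the reducibility of $f_{n_0}$. I would prove irreducibility of $f_1,\ldots,f_{n_0-1}$ by induction on $k$. For $k=1$: since $n_0\ge 2$ and $n_0$ is minimal, $P_1(a)=a$ is not an $l$-th power for any prime $l\mid d$, so Lemma 3.3 (using $d\not\equiv 0\pmod 4$) yields that $f_1(x)=x^d-a$ is irreducible. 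For the inductive step, assume $f_1,\ldots,f_{k-1}$ are irreducible for some $2\le k\le n_0-1$; if $f_k$ were reducible, Lemma 4.1.\textbf{1} would produce a prime $l\mid d$ with $P_k(a)$ an $l$-th power, contradicting the minimality of $n_0$. Hence $f_k$ is irreducible, completing the induction and in particular giving that $f_{n_0-1}$ is irreducible.

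Finally, for the last clause I apply Lemma 3.3 once more to $x^d - P_{n_0}(a)$: since $P_{n_0}(a)$ is a (nonzero) $l$-th power for some prime $l\mid d$, the converse half of Lemma 3.3 shows that $x^d - P_{n_0}(a)$ factors, and again the $4\mid d$ condition is irrelevant by hypothesis. I do not expect a serious obstacle in any step: the only thing worth double-checking is that the inductive argument only invokes Lemma 4.1.\textbf{1} at indices $k\le n_0-1 < n_0$, so the minimality of $n_0$ really does deliver the required contradiction; everything else is bookkeeping across the three cases.
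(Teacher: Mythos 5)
Your proof is correct and follows essentially the same route as the paper: both reduce everything to Lemma 4.1 together with Lemma 3.3 and the minimality of $n_0$, the only cosmetic difference being that you establish the irreducibility of $f_1,\ldots,f_{n_0-1}$ by a forward induction whereas the paper picks the greatest $m\leq n_0-1$ with $f_m$ irreducible and shows $m+1=n_0$. You also rightly make explicit the tacit assumption $P_{n_0}(a)\neq 0$ in the case $n_0\geq 2$, which the paper leaves implicit.
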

						\begin{proof} If $P_{n_0}(a)=0$ or $n_0=1$ then, by the preceding lemma, $f(x)$ is reducible over  $\mathbb{F}_q$. Suppose that $n_0\geq 2$, the preceding lemma shows that $f_{n_0}(x)$  is reducible over  $\mathbb{F}_q$. Since $f(x)$ is irreducible over  $\mathbb{F}_q$ (otherwise $n_0=1$), then we may consider the greatest integer $m\in\{1,\ldots,n_0-1\}$ such that $f_m(x)$ is irreducible over  $\mathbb{F}_q$, which in turn implies $f_{m+1}(x)$ is reducible over  $\mathbb{F}_q$. Lemma 4.1 shows that $P_{m+1}(a)$ is an $l$-th power in $\mathbb{F}_q$. Since $m+1\leq n_0$ and $n_0$ is minimal, then $m+1=n_0$, hence $f_{n_0-1}(x)$ is irreducible over $\mathbb{F}_q$. The last statement is obvious and its proof will be omitted. 
						\end{proof}	

\begin{corollary} Let $f(x)=x^d-a\in\mathbb{F}_q[x]$ with $a\neq 0$ and $d\not\equiv 0\pmod 4$. Let $\delta$ be a positive integer such that $\delta\equiv 1\pmod l$ for any prime factor $l$ of $d$. Then $f(x)$ is stable over $\mathbb{F}_q$ if and only if it is stable over $ \mathbb{F}_{q^\delta}$.
\end{corollary}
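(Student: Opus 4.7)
The plan is to invoke Theorem 4.2 on both $\mathbb{F}_q$ and $\mathbb{F}_{q^\delta}$ and compare the resulting criteria. The starting point is that the polynomials $P_n(x)$ are defined purely from $d$ and have coefficients in the prime subfield, so the value $P_n(a)$ is the same element of $\mathbb{F}_q$ regardless of whether we regard $a$ as lying in $\mathbb{F}_q$ or in $\mathbb{F}_{q^\delta}$. In particular, the minimal indices $n_0<m_0$ satisfying $P_{n_0}(a)=P_{m_0}(a)$ coincide in the two settings, since they only record equalities among elements that already lie in $\mathbb{F}_q$.

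Given this, the claim reduces to showing that for every prime $l\mid d$ and every $k\in\{1,\ldots,m_0-1\}$,
\[
P_k(a)\in\mathbb{F}_q^l\;\Longleftrightarrow\;P_k(a)\in\mathbb{F}_{q^\delta}^l,
\]
where, in agreement with the convention of Theorem 4.2, $P_k(a)=0$ is treated as lying in $\mathbb{F}_q^l$ (and this case is consistent on both sides). I would split the verification according to whether $l$ divides $q-1$. If $l\mid q-1$, then the hypothesis $\delta\equiv 1\pmod l$ allows one to apply Lemma 2.1 to the nonzero $P_k(a)$, which yields the equivalence directly. If $l\nmid q-1$, then $x\mapsto x^l$ is a bijection on $\mathbb{F}_q^*$, so every nonzero element of $\mathbb{F}_q$ is already an $l$-th power inside $\mathbb{F}_q^*\subseteq\mathbb{F}_{q^\delta}^*$; in particular $a=P_1(a)$ belongs to both $\mathbb{F}_q^l$ and $\mathbb{F}_{q^\delta}^l$, so both sides of the equivalence hold trivially and the stability criterion of Theorem 4.2 fails at $k=1$ over each of the two fields.

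Combining the two cases, the list of ``bad'' indices produced by Theorem 4.2 is identical over $\mathbb{F}_q$ and over $\mathbb{F}_{q^\delta}$, so $f(x)$ is stable over one field if and only if it is stable over the other. The only subtlety is the case $l\nmid q-1$, which falls outside the hypothesis of Lemma 2.1 and has to be handled separately by the above remark; once this point is dispatched, the rest is a mechanical translation through Lemma 2.1 and Theorem 4.2.
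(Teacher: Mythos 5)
Your proof is correct and follows essentially the same route as the paper: both reduce stability over each field to the criterion of Theorem 4.2 and transfer the $l$-th power conditions between $\mathbb{F}_q$ and $\mathbb{F}_{q^\delta}$ via Lemma 2.1. The only real difference is in the edge case $l\nmid q-1$: you dispatch it by observing that $a=P_1(a)$ is then an $l$-th power in both fields (so neither criterion can hold), whereas the paper gets one implication for free (irreducibility over $\mathbb{F}_{q^\delta}$ descends to $\mathbb{F}_q$) and, in the other direction, rules this case out by deducing $l\mid q-1$ from the irreducibility of $f$ over $\mathbb{F}_q$ via item 1 of Corollary 3.6.
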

\begin{proof} {\bf Sufficiency of the condition.} Obvious.

 {\bf Necessity of the condition.} By contradiction, suppose that $f(x)$ is not stable over $\mathbb{F}_{q^\delta}$. By Theorem 4.2, there exist an index $n$ and a prime number $l\mid d$ such that $P_n(a)=0$ or $P_n(a)\in \mathbb{F}_{q^\delta}^l$. In the first case $f(x)$ is not stable over $ \mathbb{F}_{q}$, a contradiction. Now we consider the second possibility. Since $f(x)$ is stable over $\mathbb{F}_q$, then in particular $f(x)$ is irreducible, hence by item 1. of corollary 3.6, any prime factor of $d$ divides $q-1$, thus $l\mid q-1$. Now Lemma 2.1 implies $P_n(a)\in \mathbb{F}_{q}^l$ contradicting the stability of $f(x)$ over $\mathbb{F}_{q}$.
\end{proof}						
						
						\begin{theorem} Let $d\geq 2$ be an integer such that $d\neq 0\pmod 4 $ and let $%
							f(x)=x^{d}-a\in \mathbb{F} _{q}[x]$ with $a\neq 0$. Let $\delta=gcd(q-1,d)$. Suppose that $P_n(a)\neq 0$ for any positive integer $n$. If $f_1(x),\ldots,f_{(q-1)/\delta+1}(x)$ are irreducible over  $\mathbb{F}_q$, then $f(x)$ is stable.
						\end{theorem}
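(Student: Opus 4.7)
The plan is to combine Theorem 4.2 and Theorem 4.3 with a counting bound on the orbit length $m_0$ forced by the hypothesis $P_n(a)\neq 0$. Specifically, I will show $m_0 \leq (q-1)/\delta + 2$, from which the stated conclusion drops out almost immediately by contrapositive.

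First I would establish the bound on $m_0$. For $n\geq 2$ the recursion $P_n(a)=P_{n-1}(a)^{d}+(-1)^{d-1}a$ and the hypothesis $P_{n-1}(a)\neq 0$ give
\begin{equation*}
P_n(a)-(-1)^{d-1}a \;=\; P_{n-1}(a)^{d} \;\in\; (\mathbb{F}_q^{\ast})^{d}.
\end{equation*}
Since $(\mathbb{F}_q^{\ast})^{d}=(\mathbb{F}_q^{\ast})^{\gcd(q-1,d)}=(\mathbb{F}_q^{\ast})^{\delta}$ is the unique subgroup of $\mathbb{F}_q^{\ast}$ of order $(q-1)/\delta$, each $P_n(a)$ with $n\geq 2$ lies in the coset $(-1)^{d-1}a+(\mathbb{F}_q^{\ast})^{\delta}$, which contains exactly $(q-1)/\delta$ elements. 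By the minimality of the pair $(n_0,m_0)$, the values $P_1(a),\ldots,P_{m_0-1}(a)$ are pairwise distinct; in particular $P_2(a),\ldots,P_{m_0-1}(a)$ are $m_0-2$ distinct elements of that coset, so $m_0-2\leq (q-1)/\delta$, i.e.\ $m_0\leq (q-1)/\delta+2$.

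Now I would conclude by contradiction. Assume $f(x)$ is not stable. Since $f_1(x)=f(x)$ is irreducible by hypothesis, we cannot be in the degenerate case $n_0=1$ of Theorem 4.3, so there exists a smallest positive integer $n_0\geq 2$ and a prime $l\mid d$ for which $P_{n_0}(a)$ is an $l$-th power in $\mathbb{F}_q^{\ast}$; moreover Theorem 4.3 identifies $n_0$ as exactly the smallest index at which an iterate becomes reducible, that is, $f_{n_0}(x)$ is reducible while $f_{n_0-1}(x)$ is irreducible. Because $\{P_n(a):n\geq 1\}=\{P_1(a),\ldots,P_{m_0-1}(a)\}$, there exists $k\in\{1,\ldots,m_0-1\}$ with $P_k(a)=P_{n_0}(a)$, and $P_k(a)$ is therefore also an $l$-th power. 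By minimality of $n_0$ this forces $n_0\leq k\leq m_0-1\leq (q-1)/\delta+1$. Hence $f_{n_0}(x)$ is a reducible iterate among $f_1(x),\ldots,f_{(q-1)/\delta+1}(x)$, contradicting the hypothesis.

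The only delicate point is the counting step, and even there the work is just recognising that $(\mathbb{F}_q^{\ast})^{d}=(\mathbb{F}_q^{\ast})^{\delta}$ has size $(q-1)/\delta$; once this is in hand, Theorems 4.2 and 4.3 do the rest. No serious obstacle is anticipated.
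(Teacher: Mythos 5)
Your proof is correct, and its overall skeleton matches the paper's: establish $m_0\leq (q-1)/\delta+2$ and then feed that bound into Theorems 4.2 and 4.3. But the way you obtain the bound is genuinely different. The paper applies the pigeonhole principle to the $(q-1)/\delta+1$ values $P_1(a),\ldots,P_{(q-1)/\delta+1}(a)$ modulo the subgroup $H=(\mathbb{F}_q^{\ast})^{(q-1)/\delta}$ of index $(q-1)/\delta$: two values in the same coset satisfy $P_j(a)=b^{(q-1)/\delta}P_i(a)$, hence $P_j(a)^d=P_i(a)^d$ (because $\delta\mid d$ makes $b^{d(q-1)/\delta}=1$), hence $P_{j+1}(a)=P_{i+1}(a)$ and $m_0\leq j+1$. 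You instead use the additive structure: for $n\geq 2$ the value $P_n(a)-(-1)^{d-1}a=P_{n-1}(a)^d$ lies in $(\mathbb{F}_q^{\ast})^{\delta}$, so all of $P_2(a),\ldots,P_{m_0-1}(a)$ sit in a single translate of a set of size $(q-1)/\delta$, and distinctness gives the bound. Your version has the small advantage of not needing the functional equation to collapse a coset coincidence into an actual coincidence, but it leans on the assertion that $P_1(a),\ldots,P_{m_0-1}(a)$ are pairwise distinct. That assertion is true, but it is not purely "by minimality of the pair $(n_0,m_0)$" as you state: one needs the standard rho-shape argument (if $P_i(a)=P_j(a)$ with $i<j$ then $i\geq n_0$ by minimality of $n_0$, and since $m_0-n_0$ is the exact period of the cycle, $(m_0-n_0)\mid(j-i)$ forces $j\geq m_0$). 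A one-line justification of this would make your counting step airtight; the paper's pigeonhole sidesteps it entirely, though the paper itself implicitly relies on the same distinctness when it claims $m_0\leq q+1$.
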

						\begin{proof} Let \begin{equation*}H= \mathbb{F}_q^{\star (q-1)/\delta}\,\,\mbox{and}\,\, A=\{P_1(a),\ldots,P_{\frac{(q-1)}{\delta}+1}\}. 
							\end{equation*}
							$H$ is a subgroup of $\mathbb{F}_q^\star$, so it index is equal to $(q-1)/\delta$. Since the multiset $A$ contains $\frac{(q-1)}{\delta}+1$ elements, there exist $i$ and $j$ with $i<j$ such that $P_j(a)\equiv P_i(a)\pmod H$, that is $P_j(a)=b^{(q-1/d)} P_i(a)$ with $b\in\mathbb{F}_q^\star$.  Moreover we may suppose that $i$ and $j$ are minimal for this property. This implies $P_j(a)^d=P_i(a)^d$ and then, by definition of the polynomials $P_k(x)$, $P_{j+1}(a)=P_{i+1}(a)$. From the definition of $n_0$ and $m_0$, we deduce that $i+1=n_0$ and $j+1=m_0$. Since $j\leq \frac{(q-1)}{\delta}+1$, then $m_0-1=j\leq \frac{(q-1)}{\delta}+1$. Now the stability follows from Theorem 4.3.
						\end{proof}
						\begin{remark} The preceding theorem shows that $m_0\leq (q-1)/\delta+2$. Here are two examples where the bound is reached.
							\begin{itemize}
								\item [$\bullet$] $q=7$, $d=2$, $a=-2$, $\delta=2$ and $f(x)=x^2+2$. We have $P_1(-2)=-2$, $P_2(-2)=-1$, $P_3(-2)=3$, $P_4(-2)=-3$ and  $P_5(-2)=-3$, hence $m_0=5=(7-1)/2+2$.
								\item [$\bullet$] $q=7$, $d=3$, $a=2$, $\delta=3$ and $f(x)=x^2-2$. We have	$P_1(2)=2$, $P_2(2)=3$, $P_3(2)=1$ and  $P_4(2)=3$, hence $m_0=4=(7-1)/3+2$.
							\end{itemize}
						\end{remark}
						
						\begin{corollary} Let $a\in\mathbb{F}_q^\star$. Suppose that $d$ is odd, then $x^d-a$ is stable over $\mathbb{F}_q$ if and only if the same property holds for  $x^d+a$ 
						\end{corollary}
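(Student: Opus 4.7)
The plan is to reduce the corollary directly to Lemma 2.4 via conjugation by the linear map $u(x)=-x$. Taking $\lambda=-1\in\mathbb{F}_q^\star$, I would compute $g(x)=(u^{-1}\circ f\circ u)(x)$ for $f(x)=x^d-a$. Since $u^{-1}(y)=-y$ and $d$ is odd, the sign flip $(-x)^d=-x^d$ gives
\begin{equation*}
g(x)= -\bigl((-x)^d-a\bigr) = x^d+a,
\end{equation*}
so the two binomials appearing in the statement are exactly the pair $(f,g)$ related by conjugation in Lemma 2.4.

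Once this identification is in hand, Lemma 2.4 guarantees that for every $n\geq 1$ the iterates $f_n(x)$ and $g_n(x)$ have the same number of irreducible factors over $\mathbb{F}_q$. In particular $f_n(x)$ is irreducible if and only if $g_n(x)$ is, so stability of $f(x)=x^d-a$ is equivalent to stability of $g(x)=x^d+a$, which is the claimed equivalence.

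The only point that really uses the hypothesis that $d$ is odd is the sign calculation $(-x)^d=-x^d$; without it, conjugation by $u(x)=-x$ produces $-x^d+a$ rather than $x^d+a$ and the reduction breaks down. Since Lemma 2.4 does all the remaining work, I do not anticipate any serious obstacle, and the proof should amount to little more than recording this conjugation identity and invoking the lemma.
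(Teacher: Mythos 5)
Your proof is correct, but it follows a different route from the paper. You conjugate $f(x)=x^d-a$ by $u(x)=-x$, observe that for odd $d$ this turns $f$ into $x^d+a$, and invoke Lemma 2.4 to transfer irreducibility of every iterate; this is a clean, purely formal argument that works over an arbitrary field (and in characteristic $2$ the statement is vacuous since $-a=a$), exactly in the spirit of how the paper later handles non-monic binomials in Proposition 4.9. The paper instead stays inside its $P_n$-machinery: it proves by induction that $P_n(-a)=-P_n(a)$ when $d$ is odd, and then deduces the equivalence from the stability criterion of Theorem 4.2 together with item 3 of Corollary 3.6, using that for an odd prime $l\mid d$ an element is an $l$-th power if and only if its negative is. Your approach is shorter and avoids the finite-field-specific criterion altogether; the paper's approach has the incidental benefit of recording the identity $P_n(-a)=-P_n(a)$, which ties the result to the table computations. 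Both are complete proofs.
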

						\begin{proof}
							By induction on $n$, we show that $P_n(-a)=-P_n(a)$. Then the conclusion follows immediately from the preceding theorem and from item 3. of Corollary 3.6. This identity is true by assumption. Suppose it is true for the step $n-1$. Then \begin{equation*}P_n(-a)=\Big(P_{n-1}(-a)\Big)^d+(-a)=-\Big(P_{n-1}(a)\Big)^d-a=-P_n(a).\end{equation*}	
						\end{proof}
						\begin{remark} One verifies easily that $$f_n(0)=\begin{cases}&\mbox{$-P_n(a)$ for $n\geq 1$ if $d$ is odd}\\
							&\mbox{$-P_1(a)$ if $n=1$ and $P_n(a)$ for $n\geq 2$ if $d$ is even}.
							\end{cases}$$
						\end{remark}
						\vspace{0.5cm}
						Here we consider the case where the binomials are not monic.
						\begin{proposition} Let $g(x)=bx^d-c\in\mathbb{F}_q[x]$, with $b$ and $c\neq 0$. If some prime factor $l$ of $d$ does not divide $q-1$, then $g(x)$ is reducible over  $\mathbb{F}_q$. Suppose that any prime factor of $d$ divides $q-1$. Then there exist $a$ and $\lambda\in\mathbb{F}_q^\star$ such that $g(x)=\big(u^{-1}\circ f\circ u\big)(x)$, where $u(x)=\lambda x$ and $f(x)=x^d-a$. Moreover $g(x)$ is stable over $\mathbb{F}_q$ if and only if $f(x)$ is.
						\end{proposition}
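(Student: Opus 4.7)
I would address the three assertions in order. For the first, write $g(x)=b(x^d-c/b)$, so $g$ and $x^d-c/b$ have identical factorization behavior over $\mathbb{F}_q$. If a prime $l\mid d$ satisfies $l\nmid q-1$, then $\gcd(l,q-1)=1$ and the $l$-th power map on the cyclic group $\mathbb{F}_q^\star$ is a bijection; in particular $c/b$ is automatically an $l$-th power in $\mathbb{F}_q^\star$. By Lemma 3.3, $x^d-c/b$ is reducible, hence so is $g$.

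For the existence of the conjugation under the standing hypothesis that every prime factor of $d$ divides $q-1$, a direct computation with $u(x)=\lambda x$ and $f(x)=x^d-a$ gives
\[
(u^{-1}\circ f\circ u)(x)=\lambda^{d-1}x^d-a/\lambda.
\]
Matching this with $g(x)=bx^d-c$ forces $\lambda^{d-1}=b$ and $a=c\lambda$. The task therefore reduces to exhibiting $\lambda\in\mathbb{F}_q^\star$ with $\lambda^{d-1}=b$, after which $a:=c\lambda$ is determined. This is the step I expect to be the main obstacle: one has to argue that $b$ lies in the subgroup $(\mathbb{F}_q^\star)^{d-1}$, presumably by exploiting the cyclic structure of $\mathbb{F}_q^\star$ in conjunction with the prime-divisibility hypothesis on $d$.

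Finally, once the conjugation has been established, Lemma 2.4 immediately yields the stability equivalence: for every $n\geq 1$, the iterates $g_n$ and $f_n$ have the same number of irreducible factors over $\mathbb{F}_q$, so $g_n$ is irreducible if and only if $f_n$ is, and $g$ is stable over $\mathbb{F}_q$ if and only if $f$ is.
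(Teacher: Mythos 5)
Your handling of the first assertion (factor out $b$, observe that $c b^{-1}$ is automatically an $l$-th power when $l\nmid q-1$, invoke Lemma 3.3) and of the last assertion (Lemma 2.4) coincides with the paper's proof. The genuine gap is precisely the step you flag and then decline to carry out: producing $\lambda\in\mathbb{F}_q^\star$ with $\lambda^{d-1}=b$. The paper closes this in one line by asserting that $\gcd(d-1,q-1)=1$, so that $\lambda\mapsto\lambda^{d-1}$ is an automorphism of the cyclic group $\mathbb{F}_q^\star$, the equation $\lambda^{d-1}=b$ has a unique solution, and then $a=c\lambda$ is forced. Saying only that one should argue ``presumably by exploiting the cyclic structure'' leaves your proposal incomplete at the one point where the content of the proposition lies, so as written it does not constitute a proof.

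That said, your instinct to treat this as the main obstacle is well founded, because the paper's own justification does not hold up: the hypothesis that every prime factor of $d$ divides $q-1$ does not imply $\gcd(d-1,q-1)=1$. Take $q=7$, $d=3$: the only prime factor of $d$ is $3$, which divides $q-1=6$, yet $\gcd(d-1,q-1)=\gcd(2,6)=2$. The squaring map on $\mathbb{F}_7^\star$ has image $\{1,2,4\}$, so for $b=3$ the equation $\lambda^{2}=b$ has no solution and $g(x)=3x^3-c$ is conjugate to no $x^3-a$ by any $u(x)=\lambda x$; the existence claim itself fails in the stated generality. (A similar failure occurs for $q=31$, $d=6$.) So the correct conclusion is that the conjugation step requires the extra hypothesis $\gcd(d-1,q-1)=1$, under which your outline and the paper's argument both go through; without it, neither your sketch nor the paper's one-line claim can be completed.
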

						\begin{proof} If $l\mid d$ and $l\nmid q-1$, then $cb^{-1}\in \mathbb{F}_q^{\star l}$, hence $x^d-cb^{-1}$ is reducible over $\mathbb{F}_q$ and then the same holds for $g(x)$. Suppose that any prime factor of $d$ divides $q-1$. The condition:  there exist $a$ and $\lambda\in\mathbb{F}_q^\star$ such that $g(x)=\big(u^{-1}\circ f\circ u\big)(x)$ is equivalent to $b=\lambda^{d-1}$ and $c=a\lambda^{-1}$. Since $gcd(d-1,q-1)=1$, then the first equation determines $\lambda$. The second equation determines $a$. The statement about stability follows from Lemma 2.4 
							
						\end{proof}
						\begin{example} \textbf{1.} $f(x)=x^{2}+1\in \mathbb{F} _{3}[x].$ So $a=-1$ and $d=2.$
							
							$P_{1}(a)=P_{2}(a)=-1.$
							
							$-1$ is not a square in $\mathbb{F} _{3},$ hence $x^{2}+1$ is stable over $%
							\mathbb{F} _{3}.$
							
							$%
							\begin{array}{l}
							\end{array}%
							$
							
							\textbf{2. }$f(x)=x^{2}-2\in \mathbb{F} _{5}[x].$ So $a=2$ and $d=2.$
							
							$P_{1}(a)=P_{2}(a)=2$\textbf{\ }which is not a square in $\mathbb{F} _{5},$
							hence the stability of $x^{2}-2$ over $\mathbb{F} _{5}.$
							
							$%
							\begin{array}{l}
							\end{array}%
							$
							
							\textbf{3.} $f(x)=x^{2}+2\in \mathbb{F} _{5}[x].$ So $a=-2$ and $d=2.$
							
							$P_{1}(a)=-2$ and $P_{2}(a)=1$ which is a square in $\mathbb{F} _{5}.$ $%
							x^{2}+2$ is not stable over $\mathbb{F} _{5}$.
							
							$%
							\begin{array}{l}
							\end{array}%
							$
							
							\textbf{4.} We have $\mathbb{F} _{9}=\mathbb{F} _{3}(i)$ where $i^{2}=-1.$
							
							$%
							\begin{array}{l}
							\end{array}%
							$
							
							$\bullet f_{1}(x)=x^{2}-(1+i)\in \mathbb{F} _{9}[x].$ So $a=(1+i)$ and $d=2.$
							
							$P_{3}(a)=-1$ which is a square in $\mathbb{F} _{9},$ hence $x^{2}-(1+i)$ is
							not stable over $\mathbb{F} _{9}$.
							
							$%
							\begin{array}{l}
							\end{array}%
							$
							
							$\bullet f_{2}(x)=x^{2}+(1+i)\in \mathbb{F} _{9}[x].$ So $a=-(1+i)$ and $d=2.$
							
							$P_{2}(a)=1$ which is a square in $\mathbb{F} _{9},$ hence $x^{2}+(1+i)$ is
							not stable over $\mathbb{F} _{9}$.
							
							$%
							\begin{array}{l}
							\end{array}%
							$
							
							$\bullet f_{3}(x)=x^{2}-(1-i)\in \mathbb{F} _{9}[x].$ So $a=1-i$ and $d=2.$
							
							$P_{3}(a)=-1$ which is a square in $\mathbb{F} _{9},$ hence $x^{2}-(1-i)$ is
							not stable over $\mathbb{F} _{9}$.
							
							$%
							\begin{array}{l}
							\end{array}%
							$
							
							$\bullet f_{4}(x)=x^{2}-(i-1)\in \mathbb{F} _{9}[x].$ So $a=i-1$ and $d=2.$
							
							$P_{2}(a)=1$ which is a square in $\mathbb{F} _{9},$ hence $x^{2}-(i-1)$ is
							not stable over $\mathbb{F} _{9}$.
						\end{example}
							
						A Mersenne number is a positive integer of the form $2^m-1$, where $m$ is an integer at least equal to $2$. Set $q=2^m$. When $q-1$ is prime, this prime is called Mersenne prime. It is well known that if $q-1$ is a Mersenne prime then $m$ is prime. The converse is false, the first counterexample is $2^{11}-1=2047=23\times 89$. We prove the following.
						\begin{theorem} Let 
							\begin{itemize}
								\item [1.] Let $q$ be a non trivial prime power and $\alpha\in\mathbb{F}_q^\star$. Then $x^{q-1} -\alpha$ is irreducible over $\mathbb{F}_q$ if and only if $\alpha$ generates $\mathbb{F}_q^\star$.
								\item [2.]	Suppose that $q=2^m$, where $m\geq 2$ is an integer. Then the following conditions are equivalent.
								\begin{itemize}
									\item [(i)] For any $\alpha\in\mathbb{F}_q\setminus\{0,1\}$, $x^{q-1}-\alpha$ is stable over $\mathbb{F}_q$
									\item [(ii)] For any $\alpha\in\mathbb{F}_q\setminus\{0,1\}$, $x^{q-1}-\alpha$ is irreducible over $\mathbb{F}_q$
										\item [(iii)]	$q-1$ is a Mersenne prime.
									\end{itemize}
							\end{itemize}
						\end{theorem}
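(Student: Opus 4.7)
The plan is to handle the two parts separately, using Lemma 3.4 for Part 1 and then bootstrapping to Part 2, with the crucial step being a short explicit computation of the sequence $P_n(\alpha)$ in characteristic $2$. For Part 1, I would apply Lemma 3.4 with $d = q - 1$. The side condition ``$q \equiv 1 \pmod{4}$ if $4 \mid d$'' is automatic when $d = q - 1$, so irreducibility of $x^{q-1} - \alpha$ reduces to: every prime factor of $q - 1$ divides the order $e$ of $\alpha$ in $\mathbb{F}_q^\star$ but does not divide $(q-1)/e$. If $\alpha$ generates $\mathbb{F}_q^\star$, then $e = q - 1$ and $(q-1)/e = 1$, so the condition is trivial. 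Conversely, if $e < q - 1$, any prime dividing $(q-1)/e$ would also divide $q - 1$, hence $e$, contradicting the second clause.

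For Part 2, (i) $\Rightarrow$ (ii) is immediate because the first iterate $f_1 = f$ must be irreducible when $f$ is stable. For (ii) $\Rightarrow$ (iii), Part 1 says that every $\alpha \in \mathbb{F}_q \setminus \{0, 1\}$ generates $\mathbb{F}_q^\star$, so the number of generators $\phi(q - 1)$ is at least $q - 2$; combined with the inequality $\phi(n) \leq n - 1$ for $n \geq 2$, with equality iff $n$ is prime, this forces $\phi(q - 1) = q - 2$ and hence $q - 1$ to be prime, i.e., a Mersenne prime.

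The main work lies in (iii) $\Rightarrow$ (i). Let $p = q - 1$ and take $\alpha \in \mathbb{F}_q \setminus \{0, 1\}$. Since the order of $\alpha$ in $\mathbb{F}_q^\star$ divides the prime $p$ and is not $1$, it equals $p$, so $\alpha$ generates $\mathbb{F}_q^\star$ and Part 1 gives irreducibility of $f(x) = x^p - \alpha$. To deduce stability I would apply Theorem 4.3 (legal since $d = p$ is odd). The key observation is that in characteristic $2$, $\beta^{q - 1} = 1$ for every $\beta \in \mathbb{F}_q^\star$, so $P_2(\alpha) = \alpha^{q - 1} + \alpha = 1 + \alpha$ and then $P_3(\alpha) = (1 + \alpha)^{q - 1} + \alpha = 1 + \alpha = P_2(\alpha)$, using $\alpha \neq 0$ and $\alpha \neq 1$. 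Hence $m_0 = 3$ and $\{P_n(\alpha) : n \geq 1\} = \{\alpha, 1 + \alpha\}$. Since $|\mathbb{F}_q^\star| = p$, the set $\mathbb{F}_q^p$ of $p$-th powers equals $\{0, 1\}$, so the hypothesis ``$P_k(\alpha) \notin \mathbb{F}_q^p$'' for $k \in \{1, 2\}$ reduces to $\alpha, 1 + \alpha \notin \{0, 1\}$, which holds by choice of $\alpha$. Theorem 4.3 then yields stability. The main obstacle, such as it is, is spotting that the orbit of $\alpha$ under the iteration collapses to only two elements in this setting; once that is done, the remaining verification is routine.
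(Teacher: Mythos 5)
Your proposal is correct and follows essentially the same route as the paper: Part 1 via Lemma 3.4 (with the $4\mid d$ side condition automatic), (ii)$\Rightarrow$(iii) by forcing every nonidentity element to generate the cyclic group, and (iii)$\Rightarrow$(i) by observing that in characteristic $2$ the orbit collapses to $\{\alpha,1+\alpha\}$ and that the $(q-1)$-th powers are exactly $\{0,1\}$, which is precisely the paper's induction showing $P_n(\alpha)\notin\{0,1\}$. The only quibble is a citation slip: the stability criterion you invoke is Theorem 4.2 (Theorem A), not Theorem 4.3.
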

						\begin{proof}
							\begin{itemize}
								\item [1.] {\bf Necessity of the condition.} Let $e$ be the order of $\alpha$ in  $\mathbb{F}_q^\star$. Obviously $e\leq q-1$. On the other hand, let $l$ be a prime divisor of $q-1$. Suppose, by contradiction, that the $l$-adic valuations satisfy the condition $\nu_l(a) <\nu_l(q-1)$, then $l\mid (q-1) /e$, which contradicts the irreducibility of $x^{q-1}-\alpha$ (see Lemma 3.4). Therefore $\nu_l(a) =\nu_l(q-1)$, and then $e=q-1$.
								
								{\bf Sufficiency of the condition.} Since $e=q-1$, then, by Lemma 3.4 , $x^{q-1}-\alpha$ is irreducible over $\mathbb{F}_q$.  
								\item [2.]	
								\begin{itemize}
									\item[$\bullet$] $(i)\Rightarrow (ii).$ Obvious.
								\item[$\bullet$] $(ii)\Rightarrow (iii).$ By \textbf{1.}, for any $\alpha\in\mathbb{F}_q\setminus\{0,1\}$, $\alpha$ generates  $\mathbb{F}_q^\star$. This implies that the order of this cyclic group is a prime number, thus $q-1$ is prime.
								\item[$\bullet$] $(iii)\Rightarrow (i).$ By \textbf{1.}, for any $\alpha\in\mathbb{F}_q\setminus\{0,1\}$, $x^{q-1}-\alpha$ is irreducible over  $\mathbb{F}_q$. To get the stability, since  $(\mathbb{F}_q^{\star})^{q-1} =\{1\}$, we must show that $P_n(\alpha)\notin\{0,1\}$ for any $n\geq 1$. For $n=1$, this is proved above. Suppose, by induction, that it is true for $n\geq 1$. We have $P_{n+1}(\alpha)=P_{n}(\alpha)^{q-1}+(-1)^{q-2}\alpha=1+\alpha$. If $P_{n+1}(\alpha)=0$, then $\alpha=1$. If $P_{n+1}(\alpha)=1$, then $\alpha=0$. In both cases we reach a contradiction. Therefore $x^{q-1}-\alpha$ is stable over $\mathbb{F}_q$.
								\end{itemize}	
								\end{itemize}		
						\end{proof}
\newpage
In \cite{LN} a table of irreducible polynomials over $\mathbb{F}_q$, of degree $d$, for small $q$ and small $d$ is given. The following table lists the stable binomials $f(x)=x^d-a$ for $3\leq q\leq 27$, $2\leq d\leq 10$, $d\neq 0\pmod 4$ and $a\in\mathbb{F}_q\setminus\{0,1\}$. The values of $d$ for which there exists a prime number $l\mid d$ and $l\nmid q-1$ are omitted since in this case $f(x)$ is reducible over $\mathbb{F}_q$(see Corollary 3.6. The values of $d$ which are congruent to $0$ modulo $4$ are also omitted. For given $q$, $d$ and $a$, the table lists the sequence $[P_1(a)],\ldots,P_{m_0}(a)]$ (see the begining of section 4 for the definition of this sequence). One and only one of this list, say $P_n(a)$, is possibly bold. This means that $P_n(a)$ is an $l$-th power for some prime divisor $l$ of $d$ and $n$ is the smallest postive integer satisfying this property. This implies that $n$ is the smallest positive integer such that $f_n(x) $ is reducible over $\mathbb{F}_q$. If no element is bold then $f(x)$ is stable.
								The elements of $\mathbb{F}_q\setminus\{0,1\}$ are enumerated in the following way. If $q=p$ is a prime number, then $a=2,\ldots,p-1$. If $q=p^e$ with $e\geq 2$, then a generator $\alpha$ of $\mathbb{F}_q^\star$ is choosen and its minimal polynomial over $\mathbb{F}_p$, $M(x)$, is mentionned. In this case $a=\alpha,\ldots,\alpha^{q-2}$. If a binomial $f(x)$ of degree $d$ is revealed to be stable over $\mathbb{F}_q$ by this table, then we have an infinite list of polynomials having the same property. 
								\vspace{0.2cm}
								
								{\centerline{\bf{Table of stable polynomials} }}
								
	\begin{tabularx}{11.75cm}{|p{0.25cm}|X|p{0.25cm}|X|p{0.25cm}|X|}
	\hline
		\multicolumn{2}{|c|}{$q=3$} & \multicolumn{4}{c|}{$q=4,M(X)=X^{2}+X+1$}     
     \\
		\hline
		\multicolumn{2}{|c|}{$\underline{d=2}\hspace{3cm}$} & \multicolumn{2}{c|}{$\underline{d=3}\hspace{2cm}$}                             & \multicolumn{2}{c|}{$\underline{d=9}\hspace{3cm}$}  
	\\ 
	\multicolumn{2}{|c|}{} & \multicolumn{2}{c|}{}                             & \multicolumn{2}{c|}{} 
    \\ 
	 $a$ &  List $\qquad$ & $a$ &  List $\qquad$   &  $ a$   &   List $\qquad$        
	\\ 
	2  & $[2,2] s.$   &  $ \alpha  $ & $[\alpha,\alpha^2,\alpha^2] s.$    &   $ \alpha  $        & $[\alpha,\alpha^2,\alpha^2] s.$ 
	 \\
	&     &  $\alpha^2$ & $[\alpha^2,\alpha,\alpha] s.$                             & $\alpha^2$  & $[\alpha^2,\alpha,\alpha] s. $ 
	\\
    \hline
	\multicolumn{6}{|c|}{$q=5$}                                                                \\
	\hline
	\multicolumn{6}{|c|}{$\underline{d=2}$}                                   
	\\ 
	\multicolumn{6}{|c|}{} 
	 \\	
	2     &   $[2,2] s.$        & 3                         & $[3,\underline{1},3] ns.$     & 4          &  $[\underline{4},2,0,1] ns.$         7
	 \\
	\hline
	\multicolumn{6}{|c|}{$q=7$} 
	\\
	\hline
	\multicolumn{2}{|l|}{$\underline{d=2}\hspace{2cm}$} & \multicolumn{2}{l|}{$\underline{d=3}\hspace{2cm}$}                                & \multicolumn{2}{l|}{$\underline{d=6}\hspace{2cm}$}   
	\\
	2     &   $[\underline{2},2]s.$        & 2                         & $[2,3,\underline{1},3] s.$                        & 2          &  $[\underline{2},6,6] s.$          \\
	3          & $[3,6,5,\underline{1},5] s.$          & 3                         & $[3,2,4,4]s.$                        & 3 & $[3,5,5]s.$          
	 \\
	4          & $[\underline{4},5,0,3,5]ns.$          & 4                         & $[4,5,3,3]s.$                        & 4          & $[\underline{4},4] ns.$    
	\\
	5          & $[5,6,3,\underline{4},4] ns.$          & 5                         & $[5,4,\underline{6},4]ns.$    & 5          & $[5,3,3]s.$          
	 \\
	6          & $[6,\underline{2},5,2]ns.$          & 6                         & $[\underline{6},5,5]ns.$                        & 6          & $[\underline{6},2,2] ns.$ 
	\\
	\hline
	\end{tabularx}
			
\newpage
			
	\begin{tabularx}{11.75cm}{|p{0.25cm}|X|p{0.25cm}|X|p{0.25cm}|X|}
	\hline
		\multicolumn{2}{|c|}{$q=7$} & \multicolumn{2}{c|}{$q=8,$} & \multicolumn{2}{c|}{$q=9,$}   
	\\ 
	\multicolumn{2}{|c|}{} & \multicolumn{2}{c|}{{\footnotesize{}$M(X)=X^{3}+X+1$}{\footnotesize\par}} & \multicolumn{2}{c|}{{\footnotesize{}$M(X)=X^{2}+2X+2$}{\footnotesize\par}}   
	\\	
	\hline
	\multicolumn{2}{|c|}{$\underline{d=9}\hspace{3cm}$} & \multicolumn{2}{c|}{$\underline{d=7}\hspace{1.7cm}$}                                & \multicolumn{2}{c|}{$\underline{d=2}\hspace{2.8cm}$}   
	\\
		2      & $[2,3,\underline{1},3 ]ns.$             &                          $\alpha$ & $[\alpha,\alpha^3,\alpha^3] s.$                                                       &         $\alpha$          & $[\alpha,\underline{1},\alpha^3,\alpha]ns.$           
	\\	
		3      & $[3,2,4,4]s.$             &                          $\alpha^2$ & $[\alpha^2,\alpha^6,\alpha^6]s.$                                                       &         $\alpha^2$          & $[\underline{\alpha^2},\alpha^3,\alpha^2]ns.$          
	 \\
		4      & $[4,5,3,3]s.$             &                          $\alpha^3$ & $[\alpha^3,\alpha,\alpha]s.$                                                       &       $\alpha^3$          & $[\alpha^3,\underline{1},\alpha,\alpha^5,\alpha^6]ns.$             
	 \\
		5      & $[5,4,\underline{6},4]ns.$             &$\alpha^4$                           &                            $[\alpha^4,\alpha^5,\alpha^5]s.$                            &           $\alpha^4$          & $[\underline{\alpha^4},\alpha^4]ns.$      
	\\
		6      & $[\underline{6},4,4] ns.$  &    $\alpha^5$ & $[\alpha^5,\alpha^4,\alpha^4]s.$                                                      &  $\alpha^5$   & $[\alpha^5,\alpha^3,\underline{\alpha^4}	,\alpha^2,\alpha^7,\alpha^4] ns.$              
		\\
	&              &                          $\alpha^6$ & $[\alpha^6,\alpha^2,\alpha^2]s.$                                                       &        $\alpha^6$          & $[\underline{\alpha^6},\alpha,\alpha^6]ns.$ 
	 \\ 
		&              &                           &  &        $\alpha^7$          & $[\alpha^7,\alpha,\underline{\alpha^4},\alpha^6,\alpha^5,\alpha^4] ns.$    \\
	\hline
	\multicolumn{6}{|c|}{$q=11$}                                                              \\ 
	\hline
	\multicolumn{2}{|c|}{$\underline{d=2}\hspace{3cm}$} & \multicolumn{2}{c|}{$\underline{d=5}\hspace{2cm}$}                                & \multicolumn{2}{c|}{$\underline{d=10}\hspace{3cm}$}   
	\\ 	
		2     & $[2,2]s.$      & 2 & $[2,\underline{1},3,3]ns.$               & 2        & $[2,\underline{10},10]ns.$    
	\\
		3     &  $[\underline{3},6,0,8,6]ns.$     & 3                          &$[3,4,4]s.$  & 3        & $[\underline{3},9,9]ns.$ 
	\\
		4     &    $[\underline{4},1,8,5,10,8]ns.$   & 4                         &$[4,5,5]s.$   & 4        & $[\underline{4},8,8]ns.$           
	\\
		5     & $[\underline{5},9,10,7,0,6,9]ns.$      & 5                         &$[5,6,4,6]s.$            & 5       & $[\underline{5},7,7]ns.$          
						\\
						6     & $[6,8,\underline{3},3]ns$  & 6                 
						&$[6,5,7,5]s.$            & 6       & $[6,6]s.$            
						\\
						7     & $[7,\underline{9},8,2,8]ns.$    & 7                          &$[7,6,6]s.$              & 7       & $[7,\underline{9},8,2,8]ns.$            
						\\
						8     & $[8,\underline{1},4,8]ns.$    & 8                         & $[8,7,7]s.$             & 8       & $[8,\underline{4},4]ns.$             
						\\
						9     & $[\underline{9},6,5,5]ns.$      &  9                          &$[9,\underline{10},8,8]ns.$              &  9      & $[\underline{9},3,3]ns.$
						\\
						10    & $[10,2,\underline{5},4,6,4]ns.$ & 10         
						& $[\underline{10},9,0,10]ns.$                & 10      & $[\underline{10},2,2]ns.$ 
						\\ 
						\hline
						\multicolumn{6}{|c|}{$q=13$}                                                \\
						\hline
						\multicolumn{2}{|c|}{$\underline{d=2}\hspace{3cm}$} & \multicolumn{2}{c|}{$\underline{d=3}\hspace{1.8cm}$}                                & \multicolumn{2}{c|}{$\underline{d=6}\hspace{3cm}$}   \\ 
						
						2  & $[2,2]s.$ &  2 & $[2,10,\underline{1},3,3]ns.$ & 2 & $[2,\underline{10},12,12]ns.$     
						\\
						3  & $[\underline{3},6,7,7]ns.$ & 3 & $[3,4,2,11,\underline{8},8]ns.$ & 3 & $[\underline{3},11,9,11]ns.$     
						\\
						4  & $[\underline{4},12,10,5,8,8]ns.$ &  4 & $[4,3,\underline{5},12,3]ns.$ & 4 & $[\underline{4},10,10]ns.$     
						\\
						5  & $[5,7,5]s.$ & 5 & $[\underline{5},0,5]ns.$ & 5 & $[\underline{5},7,7]ns.$     
						\\
						6 & $[6,\underline{4},10,3,3]ns.$ &  6 & $[6,\underline{1},7,1]ns.$ & 6 & $[6,6]s.$     
						\\
						7 & $[7,\underline{3},2,10,2]ns.$ & 7 & $[7,\underline{12},6,2,2]ns.$ & 7 & $[7,\underline{5},5]ns.$     
						\\
						8 & $[8,\underline{4},8]ns.$ &  8 & $[\underline{8},0,8]ns.$ & 8 & $[\underline{8},4,6,4]ns.$     
						\\
						9 & $[\underline{9},7,1,5,3,0,4,7]ns.$ &    9 & $[9,10,\underline{8},1,10]ns.$ & 9 & $[\underline{9},5,3,5]ns.$     
						\\
						10 & $[10,12,4,6,0,3,12]ns.$ &  10 & $[10,9,11,\underline{1},11]ns.$ & 10 & $[\underline{10},4,4]ns.$     
						\\
						11 & $[11,6,\underline{12},3,11]ns.$ &  11 & $[11,3,\underline{12},10,10]ns.$ & 11 & $[11,\underline{1},3,3]ns.$     
						\\
						12 & $[\underline{12},2,5,0,1,2]ns.$ &  12 & $[\underline{12},11,4,11]ns.$ & 12 & $[\underline{12},2,0,2]ns.$  	
						\\
						\hline	
						\multicolumn{6}{|c|}{$q=13$}  
						\\
						\hline 
						\multicolumn{6}{|c|}{$\underline{d=9 }$} 
						\\
						$ 2 $ & $[2,7,10,\underline{1},3,3]ns.$  &  	$ 3 $ & $[3,4,2,8,11,11]ns.$ &  	$ 4 $ & $[4,3,\underline{5},9,5]ns.$
						\\
						$ 5 $ & $[\underline{5},10,4,4]ns.$ &	$ 6 $ & $[6,11,\underline{1},7,1]ns.$ & $ 7 $ & $[7,2,\underline{12},6,12]ns.$
						\\
						$ 8 $ & $[\underline{8},3,9,9]ns.$ & 	$ 9 $ & $[9,10,\underline{8},4,8]ns.$ & $ 10 $ & $[10,9,11,\underline{5},2,2]ns.$
						\\
						$ 11 $ & $[11,6,3,\underline{12},10,10]ns.$ & $ 12 $ & $[\underline{12},11,7,7]ns.$ &   & 
						\\
						\hline
						\multicolumn{6}{|c|}{$q=16,M(x)=X^4+X+1$}
								\\
									\hline
									\multicolumn{6}{|c|}{$\underline{d=3 }$}
									\\	
									$\alpha$ &  $[\alpha,\underline{\alpha^9},\alpha^{13},\alpha^3,\alpha^3]ns.$   & $\alpha^2$ & $[\alpha^2,{\scriptstyle\underline{\alpha^3},\alpha^{11},\alpha^6,\alpha^6]}ns.$ & $\alpha^3$ & $[\underline{\alpha^3},\alpha,0,\alpha^3]ns.$
									\\
									$\alpha^4$ & $[\alpha^4,\underline{\alpha^6},\alpha^7,\alpha^{12}\alpha^{12}]ns.$ & $\alpha^5$ & $[\alpha^5,\alpha^{10},\alpha^{10}]s.$ &  $\alpha^6$ & $[\underline{\alpha^6},\alpha^2,0,\alpha^6]ns.$ 
									\\
									$\alpha^7$ & $[\alpha^7,\alpha^{10},\underline{\alpha^9},\alpha^2,\alpha^{10}]ns.$  &
									$\alpha^8$ & $[{\scriptstyle\alpha^8,\underline{\alpha^{12}},\alpha^{14}\alpha^9,\alpha^9]}ns.$& $\alpha^9$ & $[\underline{\alpha^9},\alpha^8,0,\alpha^9]ns.$ 	
									\\
									$\alpha^{10}$ & $[\alpha^{10},\alpha^5,\alpha^5]s.$
									& $\alpha^{11}$ & $[\alpha^{11},{\scriptstyle\alpha^5,\underline{\alpha^{12}},\alpha,\alpha^5]}ns.$  & $\alpha^{12}$ & $[\underline{\alpha^{12}},\alpha^4,0,\alpha^{12}]ns.$ 
									\\
									$\alpha^{13}$ & $[\alpha^{13},\alpha^{10},\underline{\alpha^6},\alpha^8,\alpha^{10}]ns.$ & $\alpha^{14}$  & $[{\scriptstyle\alpha^{14},\alpha^5,\underline{\alpha^3},\alpha^4,\alpha^5]}ns.$  &  &   
									\\
									\hline
												\multicolumn{6}{|c|}{$\underline{d=5}$}
	\\
	$\alpha$ & $[\alpha,\alpha^2,\alpha^8,\alpha^8]s.$ & $\alpha^2$ & $[\alpha^2,\alpha^4,\alpha,\alpha]s.$ &  $\alpha^3$ & $[\alpha^3,\alpha^{14},\alpha^{12},\alpha^{14}]s.$ 
	\\
		$\alpha^4$ & $[\alpha^4,\alpha^8,\alpha^2,\alpha^2]s.$ &  $\alpha^5$ & $[\alpha^5,1,\alpha^{10},0,\alpha^5]ns.$ &  $\alpha^6$ & $[\alpha^6,\alpha^{13},\alpha^9,\alpha^{13}]s.$ 
	\\ 
		$\alpha^7$ & $[\alpha^7,\alpha^{13},\alpha^{13}]s.$ & $\alpha^8$ & $[\alpha^8,\alpha,\alpha^4,\alpha^4]s.$  & $\alpha^9$ & $[\alpha^9,\alpha^7,\alpha^6,\alpha^7]s.$
			\\
		$\alpha^{10}$ & $[\underline{\alpha^{10}},1,\alpha^5,0,\alpha^{10}]ns.$  &  $\alpha^{11}$ & $[\alpha^{11},\alpha^{14},\alpha^{14}]s.$ & $\alpha^{12}$ & $[\alpha^{12},\alpha^{11},\alpha^3,\alpha^{11}]s.$  
	\\
$\alpha^{13}$ & $[\alpha^{13},\alpha^7,\alpha^7]s.$ 
		& $\alpha^{14}$ & $[\alpha^{14},\alpha^{11},\alpha^{11}]s.$     &   &
	\\
	\hline
	\multicolumn{6}{|c|}{$q=16,M(x)=X^4+X+1$}                                                    \\ 
	\hline
	\multicolumn{6}{|c|}{$d=9$} 
		\\ 
	 $\alpha $ & $[\alpha,\underline{\alpha^3},\alpha^{13},\alpha^{13}]ns.$  &
		$ \alpha^2 $ & $[\alpha^2,\underline{\alpha^6},{\scriptstyle\alpha^{11},\alpha^{11}]}ns.$& 
		$ \alpha^3 $ & $[\underline{\alpha^3},\alpha^{10},{\scriptstyle\alpha^{14},\alpha^2,0,\alpha^3]}ns.$
		\\
		$ \alpha^4 $ & ${\scriptstyle[\alpha^4,{\underline\alpha^{12}},\alpha^7,\alpha^{11},\alpha^{14},\alpha^{12}]ns.}$& 
		$ \alpha^5 $ & $[\alpha^5,\alpha^{10},\alpha^{10}]s.$& 	$ \alpha^6 $ & $[\underline{\alpha^6},\alpha^5,{\scriptstyle\alpha^{13},\alpha^4,0,\alpha^6]}ns.$
		\\
		$ \alpha^7 $ & $[\alpha^7,\alpha^4,\alpha^{10},\underline{\alpha^9},\alpha^{10}]ns.$& 	$ \alpha^8 $ & $[\alpha^8,\underline{\alpha^9},{\scriptstyle\alpha^{14},}\alpha^{14}]ns.$ &
		$ \alpha^9 $ & $[\underline{\alpha^9},\alpha^5,\alpha^7,\alpha,0,\alpha^9]ns.$
	\\
		$ \alpha^{10} $ & $[\alpha^{10},\alpha^5,\alpha^5]s.$& $ \alpha^{11} $ & ${\scriptstyle[\alpha^{11},\alpha^2,\alpha^5,\underline{\alpha^{12}},\alpha^5]ns.}$& 	$ \alpha^{12} $ & ${\scriptstyle[\underline{\alpha^{12}},\alpha^{10},\alpha^{11},\alpha^8,0,\alpha^{12}]ns}$
		\\
		$ \alpha^{13} $ & $[\alpha^{13}\alpha,{\scriptstyle\alpha^{10},\underline{\alpha^6},\alpha^{10}]}ns.$&$\alpha^{14} $ & ${\scriptstyle[\alpha^{14},\underline{1},\alpha^6,\alpha^{10},\alpha^6]}ns.$  &  &
		\\
		\hline
\end{tabularx}

	\newpage
\begin{tabularx}{11.5cm}{|p{0.25cm}|X|p{0.25cm}|X|p{0.25cm}|X|}
	\hline
	 	\multicolumn{6}{|c|}{$q=17$}                                       
     \\ 
		\hline
		\multicolumn{6}{|c|}{$d=2$} 
		\\
		2 & $[\underline{2},2]ns.$ & 	3 & $[3,6,\underline{16},15,1,15]ns.$ & 	4 & $[\underline{4},12,4]ns.$ 	
		\\	
		5 & $[5,3,\underline{4},11,14,13,4]ns.$ & 	6 & $[6,\underline{13},10,9,7,9]ns.$ & 7 & $[7,\underline{8},6,12,1,11,12]ns.$ 	
	\\
		8 & $[\underline{8},5,0,9,5]ns.$ &  9 & $[\underline{9},4,7,6,10,6]ns.$ &  	10 & $[10,5,\underline{15},11,{\scriptstyle9,3,16,3]}ns.$ 	
		\\
		11 & $[11,\underline{8},2,10,4,5,14,15,10]ns.$ & 	12 & $[12,\underline{13},4,4]ns.$ & 	13 & $[\underline{13},3,13]ns.$ 	
		\\
		14 & $[14,12,11,5,11]s.$ & 	15 & $[\underline{15},6,4,1,{\scriptstyle3,11,4]}ns.$ & 	16 & $[\underline{16},2,6,4,1,2]ns.$ 	
		\\	
		\hline
	\multicolumn{6}{|c|}{$q=19$} 
		\\	\hline
	\multicolumn{6}{|c|}{$d=2$} 
								\\
		2  & $[2,2]s.$ &	3  & $[3,\underline{6},14,3]ns.$  & 
		4  & $[\underline{4},12,7,7]ns.$      
		\\
		5  & $[\underline{5},1,15,11,2,18,15]ns.$ &      
			6 & $[\underline{6},11,1,14,0,13,$ &
			7 & $[\underline{7},4,9,17,16,2,16]ns.$       
	\\
    	&    & $\hspace{2.2cm}11]ns.$ & & &
		\\ 	
		8 & $[8,18,12,3,\underline{1},12]ns.$& 9 & $[\underline{9},15,7,2,14,16,0,$ & 	10 & $[10,14,15,\underline{6},7,1,10]ns.$
		\\
		&  &   &  $\hspace{1.2cm}10,15]ns.$ &   &
	\\
		12 & $[12,18,8,14,13,\underline{5},13]$& 11 & $[\underline{11},15,5,14,14]ns.$ &	13 & $[13,\underline{4},3,15,3]ns.$  
	\\
		14 & $[14,\underline{11},12,16,,14]ns.$ &	15 & $[15,\underline{1},5,10,9,9]ns.$ &	16   & $[\underline{16},12,14,9,8,10,8]ns.$
	\\
	17 & $[\underline{17},6,0,2,6]ns.$ &	18 & $[18,2,{\scriptstyle\underline{5},7,}12,12]ns.$ & & 
		\\  
		\hline 
	\multicolumn{2}{|p{4cm}}{$\underline{d=3 }\hspace{1,7cm}$} & \multicolumn{2}{p{2cm}}{$\underline{d=6}\hspace{3cm}$}                                & \multicolumn{2}{c|}{$\underline{d=9}\hspace{2cm}$}  
		\\
	2  &   $[2,10,14,10]s.$& 2 & $[2,\underline{5},5]ns.$   & 2 & $[2,\underline{1},3,1]ns.$       
	\\
		3 & $[3,\underline{11},4,10,15,15]ns.$ & 3 & $[3,\underline{4},8,17,8]ns.$  & 3 & $[3,2,2]s.$  
	\\
		4 & $[4,\underline{11},5,15,16,15]ns.$   & 4 & $[\underline{4},7,0,15,7]ns$   & 4 & $[4,5,5]s.$     
	\\
		5 & $[5,16,16]s.$      & 5 & $[\underline{5},2,2]ns.$      & 5 & $[5,6,6]s.$       
	\\
   	 6 & $[6,13,\underline{18},5,17,17]$  & 6 & $[\underline{6},5,1,14,1]ns.$  & 6 & $[6,\underline{7},7]ns.$        
	\\
	7 & $[\underline{7},8,6,14,15,0,7]ns.$    & 7 & $[\underline{7},16,0,8,8]ns$    & 7  & $[\underline{7},8,8]ns.$        
		\\
		8 & $[\underline{8},7,9,15,1,9]ns.$      & 8 & $[\underline{8},7,15,3,18,12$      &  8  & $[\underline{8},9,9]ns.$          
		\\
		&   &   & $\hspace{1.5cm} ,15]ns.$ &  & 
		\\
	 9 & $[9,16,\underline{1},19,2,17,1]ns.$ & 9 & $[\underline{9},2,17,17]ns.$  & 9  & $[9,10,\underline{8},8]ns.$    
   \\
	10 & $[10,3,\underline{18},9,17,2,18]ns.$  & 10 & $[10,\underline{1},10]ns$ &  10 & $[10,9,\underline{11},11]ns.$ 
	\\
	11 & $[\underline{11},12,10,4,18,10]ns.$   & 11 & $[\underline{11},9,0,8,9]ns.$    & 11 & $[\underline{11},10,10]ns.$         
		\\
	12 & $[\underline{12},11,13,5,4,0,12]ns.$  & 12 & $[\underline{12},11,8,8]ns.$    & 12 & $[\underline{12},11,11]ns.$        
	\\
	13 & $[13,6,\underline{1},14,5,14]ns.$ & 13 & $[13,\underline{17},13]ns.$ & 13 & $[13,\underline{12},12]ns.$     
	\\
	14 & $[14,3,3]s.$ & 14 & $[14,\underline{12},6,16,12]ns.$& 14 & $[14,13,13]s.$  
	\\
		15 & $[15,\underline{8},14,4,3,4]ns.$  &  15 & $[15,15]s.$  & 15 & $[15,14,14]s.$  
		\\
		16 & $[16,4,4]s.$    &  16 & $[\underline{16},10,14,10]ns.$    & 16 & $[16,\underline{11},4,10,15,15]ns.$  
		\\
		17 & $[17,9,5,9]s.$    &  17 & $[\underline{17},9,13,13]ns.$    & 17 & $[17,\underline{18},16,18]ns.$  
	\\
		18 & $[\underline{18},17,10,11,0,18]ns.$    &  18 & $[\underline{18},2,8,2]ns.$    & 18 & $[\underline{18},17,0,18]ns.$     
	\\
	\hline
	\multicolumn{6}{|c|}{$q=23$} 
	\\
	\hline
	\multicolumn{6}{|c|}{$\underline{d=2}\hspace{1cm}$}   
	\\
	2 & $[\underline{2},2]ns.$ & 3 &  $[\underline{3},6,10,5,22,21,1,$   & 
	4 & $[\underline{4},12,2,0,19,12]ns.$ 
	\\
		&  &   & $\hspace{1.6cm}21]ns.$  &   &
    \\
		5 & $[5,20,4,11,\underline{1},19,11]ns.$   & 	6 & $[\underline{6},7,20,3,3]ns.$ &   7 & $[7,19,\underline{9},5,18,18]ns.$    
	\\ 
	8 & $[\underline{8},10,0,15,10]ns.$ & 9 & $[\underline{9},3,0,14,3]ns.$ & 10 & $[10,{\scriptstyle21,17,\underline{3},22,14,2,17]}ns.$ 
    \\ 	
	11 & $[11,\underline{18},14,1,13,20,21,16,$ &	12 & $[\underline{12},17,1,12]ns.$ &  	13 & $[\underline{13},18,12,15,13]ns.$   	
	\\ 
	14 & $[14,21,\underline{13},17,22,10,17]ns.$  &  	15 & $[15,3,17,21,\underline{12},14,$ & 16 & $[\underline{16},10,15,{\scriptstyle2,11,13,}15]ns$
	\\ 
	&  &   & $\hspace{1.2cm}20,17]ns.$  &   & 
    \\
	17 & $[17,19,22,7,\underline{9},18,8,1,7]ns.$   &   18 & $[\underline{18},7,8,0,5,7]ns.$  & 19 & $[19,20,\underline{13},12,10,12]ns.$  
	\\
		20 & $[20,\underline{12},9,15,21,7,6,16,6]ns$   &   21 & $[21,\underline{6},15,20,11,8,$     & 22 & $[22,\underline{2},5,3,9,13,9]ns.$
	\\
	&   &    &  $\hspace{1.6cm}20]ns.$ &   &
	\\
		\hline
	\multicolumn{6}{|c|}{$q=25, M\left(X\right)=X^{2}+4X+2 $} 
   \\
	\hline
	\multicolumn{2}{|c}{$\underline{d=2 }\hspace{3.2cm}$} &
	\multicolumn{2}{c}{$\underline{d=3 }\hspace{1.6cm}$} & \multicolumn{2}{c|}{$\underline{d=6}\hspace{2.7cm}$}  
	\\		
	$\alpha $    & $[\alpha,\underline{\alpha^{18}},\alpha^{10},\alpha^{21},\alpha^3,\alpha^{14},]$ &	$\alpha$    & $[\alpha,\underline{\alpha^{18}},\alpha^{15},\alpha^{20},$  &  $\alpha$    & $[\alpha,\underline{\alpha^{14}},\alpha^5,\alpha^{14}  ]ns.$ 
    \\
	& $\hspace{2cm}\alpha^{15}, \alpha^{14}]ns.$ &   & $\hspace{0.9cm}\alpha^{17},\alpha^{18}]ns$   & &
	\\
	$\alpha^2$ & $[\underline{\alpha^2},\alpha^{17},\alpha^9,\alpha,0,{\scriptstyle\alpha^{14},\alpha^{17}]}ns.$&	$\alpha^2$    & $[\alpha^2,\alpha,\underline{1},\alpha^{17},1 ]ns.$ & 	$\alpha^2$    & $[\underline{\alpha^2},\alpha^5,\alpha^{10}\alpha^{5} ]ns.$  
	\\
	$\alpha^3$ & $[\alpha^3,\alpha^{17},\underline{1},\alpha^2,\alpha^{20},\alpha^{13},\alpha^7,$&	$\alpha^3$  & $[\underline{\alpha^3},\alpha^{21,0,\alpha^3} ]ns.$ & 	$\alpha^3$ & $[\underline{\alpha^3},\alpha,\alpha^{17},\alpha^{17}]ns.$ 	  
	\\
	$\hspace{2.8cm}  \alpha^{12},\alpha^2]ns.$&   &   &   &   & 
	\\
	$\alpha^4$  & $[\underline{\alpha^4},\alpha^{12},\alpha^{20},\alpha^{22},\alpha^{15},\alpha^{19},$&	$\alpha^4$    
	& $[\alpha^4,\alpha^8,\alpha^{23},\alpha^5, $ & 	$\alpha^4$    & $[\underline{\alpha^4},\alpha^{20},\alpha^{20}]ns.$  
	\\
	& $\hspace{2.3cm} \alpha^7, \alpha^7]ns.$ &   &$\hspace{1.3cm}\alpha^{20}  \alpha^8]s.$ &  &
	\\ 
	$\alpha^5$  & $[\alpha^5,\underline{\alpha^{18}},\alpha^2,\alpha^9,\alpha^{15},\alpha^{22},$ &	$\alpha^5$    & $[\alpha^5,\underline{\alpha^{18}},\alpha^3,\alpha^4,$ & 	$\alpha^5$  & $[\alpha^5,\underline~{\alpha^{22}},\alpha^2,\alpha^2]ns.$
	\\
	&  $\hspace{2.6cm}\alpha^9, ]ns.$  &   & $ \hspace{0.9cm} \alpha^{13},\alpha^{18} ]ns.$  &   & 
	\\
	$\alpha^6$ & $[\underline{\alpha^6},\alpha^6]ns.$ &
	$\alpha^6$    & $[\underline{\alpha^6},0,\alpha^6 ]ns.$   &     
	$\alpha^6$    & $[\underline{\alpha^6},\alpha^6 ]ns.$ 		
	\\ 
	$\alpha^7$  & $[\alpha^7,\underline{\alpha^4},\alpha^{24},\alpha^9,\alpha^{16},\alpha^{24}]ns.$ &
	$\alpha^7$    & ${\scriptstyle [\alpha^7,\alpha^{10},\alpha^4,{\underline\alpha^{21}},\alpha^{11}},$&	
	$\alpha^7$ & $[\alpha^7,\underline{\alpha^{16}},\alpha^9,\alpha^{11}\alpha^{16}]ns.$
	\\ 
	& $\hspace{2cm}$   &   & $\hspace{0.7cm}{\scriptstyle \alpha^{24},\alpha^{8}, \alpha^{8} ]n}s.$   & &
	\\
	$\alpha^8$  & $[\underline{\alpha^8},\alpha^{15},\alpha^9,\alpha^{13},\alpha^{14},\alpha^{24}]$ &
	$ \alpha^8$  & $[{\scriptstyle\alpha^8,\alpha^4,\alpha^7,\alpha^{13},\alpha^{16},}$ &  
	$\alpha^8$    & $[\underline{\alpha^8},\alpha^{19},\alpha^{11},\alpha^{11} ]ns.$
	\\
	&  &  &$\hspace{1.7cm} {\scriptstyle\alpha^4} ]s.$ &   &
	\\	
	$\alpha^9$  & $[\alpha^9,\underline{\alpha^4},\alpha^{13},\alpha^{11},\alpha^{19},\alpha^{22}] $&
	$\alpha^9$    & ${\scriptstyle [\underline{\alpha^9},\alpha^{21},\alpha^3,\alpha^{21}]}ns.$ & $\alpha^9$    & $[\underline{\alpha^9},\alpha^8,\alpha^7,\alpha^4,\alpha^7 ]ns.$  
	\\ 
	&   &      &   &   &
	\\
	$ \alpha^{10}$  & $[\underline{\alpha^{10}},\alpha^{13},\alpha^8,\alpha^{10}]ns.$&
	$\alpha^{10}$ & $  {\scriptstyle [ \alpha^{10},\alpha^5,\underline{1},\alpha^{13},\alpha^{24}]ns.}$ & $\alpha^{10}$ & $[\underline{\alpha^{10}},\alpha,\alpha^2,\alpha]ns.$   
\\
	\hline
	\end{tabularx}

	\newpage
	\begin{tabularx}{11.5cm}{|p{0.25cm}|X|p{0.25cm}|X|p{0.25cm}|X|}
		\hline
	\multicolumn{6}{|c|}{} 
	\\	
	$\alpha^{11}$ & ${\scriptstyle[\alpha^{11},\underline{\alpha^{20}},\alpha^{24},\alpha^8,\alpha^{24}]}$&
	$\alpha^{11}$  & $[\alpha^{11},\alpha^2,\alpha^{20},\underline{\alpha^9},\alpha^7, $ & 	$\alpha^{11}$    & $[\alpha^{11},\underline{\alpha^8},\alpha^{21},\alpha^7,\alpha^8 ]ns.$
	\\
	&  &  & $\hspace{0.9cm} \alpha^{24}, \alpha^{16},\alpha^{16} ]ns.$&  &
	\\
	$\alpha^{12}$ & $[\underline{\alpha^{12}},\alpha^6,0,\alpha^{24},\alpha^6]ns$ &
	$\alpha^{12}$    & $[\underline{\alpha^{12}},\alpha^{18},\alpha^{24},\alpha^{12} ] ns.$  &  	$\alpha^{12}$    & $[\underline{\alpha^{12}},\alpha^6,0,\alpha^{24},\alpha^6 ]ns.$
	\\ 
	$\alpha^{13}$  & $[\alpha^{13},\alpha^{23},\underline{\alpha^8},{\scriptstyle\alpha^3},{\scriptstyle \alpha^{15}]}ns$ &
	$\alpha^{13}$ & $[\alpha^{13},\underline{\alpha^6},\alpha^3,\alpha^8,\alpha^5,$ &  $\alpha^{13}$    & $[\alpha^{13},\underline{\alpha^{15}},{\scriptstyle \alpha^2},{\scriptstyle \alpha^{17}},{\scriptstyle\alpha^{15}  ]ns.}$
	\\
	$\alpha^{15}$ & $[\alpha^{15},\alpha^{13},\underline{1},\alpha^{10},\alpha^4 ,  $ &  $\alpha^{15}$   & $[\underline{\alpha^{15}},\alpha^9,0,\alpha^{15}]ns.$  & 	$\alpha^{15}$    & $[\underline{\alpha^{15}},\alpha^5,\alpha^{13},\alpha^{13}  ]ns.$ 
	\\ 
	&  $\hspace{2cm}\alpha^{17}] ns.$ &  &  &  &
	\\
	$\alpha^{16}$  & $[\underline{\alpha^{16}},\alpha^3,\alpha^{21},\alpha^7,\alpha^{17},$& $\alpha^{16}$    & $[\alpha^{16},\alpha^{20},\alpha^{11},\alpha^{17},\alpha^8,$ &  $\alpha^{16}$    & $[\underline{\alpha^{16}},\alpha^{23},\alpha^{7},\alpha^{7}  ]ns.$
	\\ 
	& $\hspace{2.2cm}\alpha^{22}]$ &   &  $\hspace{2.2cm}\alpha^{20}]s.$ &   &
	\\
	$\alpha^{17}$  & ${\scriptstyle[\alpha^{17},\alpha^{19},\underline{\alpha^{16}},\alpha^{15},\alpha^3,}$ &
	$\alpha^{17}$    & $[\alpha^{17},\underline{\alpha^6},\alpha^{15},\alpha^{16},\alpha,$ &  $\alpha^{17}$    & $[\alpha^{17},\underline{\alpha^3},\alpha^{10},{\scriptstyle\alpha^{13},}\alpha^3 ]ns.$
	\\
	& $\hspace{2.2cm}\alpha^3]ns.$ &   & $\hspace{2.2cm}\alpha^6 ]ns.$  &  &
	\\
	$\underline{\alpha^{18}}$ & $[\alpha^{18},1,\alpha^{18}]ns.$  &	$\alpha^{18}$    & $[ \underline{\alpha^{18}},0,\alpha^{18} ]ns.$ & 	$\alpha^{18}$    & $[ \underline{\alpha^{18}},\alpha^{24},\alpha^{18} ]ns.$      
	\\ 
	$\alpha^{19}$  & $[\alpha^{19},\alpha^{15},\underline{\alpha^4},\alpha^5,\alpha^{17},$& 	$\alpha^{19}$    & $[\alpha^{19},{\scriptstyle\alpha^{22};\alpha^{16};\alpha^{19}};\alpha^{9}]$ns. & 	$\alpha^{19}$    & $[\alpha^{19},\alpha^{23},\alpha^{23}  ]s.$ 
	\\ 
	& $\hspace{2cm}\alpha^{17}]ns.$ &   &   &   &
	\\
	$\alpha^{20}$  & $\underline{[\alpha^{20}},{\scriptstyle\alpha^{12},\alpha^4,\alpha^{14},\alpha^3,\alpha^{23}]}$ & $\alpha^{20}$    & $[\alpha^{20}\alpha^{16},{\scriptstyle\alpha^{19},\alpha,\alpha^4,\alpha^{16} ]s.}$  & 	$\alpha^{20}$    & $[\underline{\alpha^{20}}\alpha^{4},\alpha^{4}]ns.$  
	\\
	$\alpha^{21}$    & $[\underline{\alpha^{21}},{\scriptstyle\alpha^{16},\alpha^{11}, \alpha^{20},\alpha^{11}  ]ns.}$  & $\alpha^{21}$    & $[\underline{\alpha^{21}},{\scriptstyle\alpha^{3},\alpha^{15},\alpha^9,0,\alpha^  {21}]ns.}$   &	$\alpha^{21}$    & $[\alpha^{21},\alpha^{23},\underline{\alpha^{10}},\alpha^{10}  ]ns.$
		\\    
	$\alpha^{22}$ & $[\underline{\alpha^{22}},\alpha^{23},0,\alpha^{10},\alpha^{23}ns.]$ &
		$\alpha^{22}$ & $[\alpha^{22},\alpha^{17},\underline{\alpha^{12}},\alpha,\alpha^{12}]ns.$ & 	$\alpha^{22}$    & $[\underline{\alpha^{22}},\alpha^{3},\alpha^{14},\alpha^3  ]ns.$
	\\ 
	$\alpha^{23}$  & $[\alpha^{23},\alpha^3,\underline{\alpha^{20}},\alpha,\alpha^{13},$&
	$\alpha^{23}$    & $[\alpha^{23},\alpha^{14},\alpha^8,\underline{\alpha^{21}},\alpha^{19},$ & $\alpha^{23}$    & $[\alpha^{23},\alpha^{19},\alpha^{19}]s.$ 
	\\
	& $\hspace{2cm}\alpha^{13}]ns.$  &  & $\hspace{1.5cm}\alpha^{12} ,\alpha^{4},\alpha^{4}]$ &  &
	\\
	\hline
	\multicolumn{6}{|c|}{$q=25,M(X)=X^2+4X+2$}
	\\
	\hline
	\multicolumn{6}{|c|}{$\underline{d=9}$} 
	\\
	$\alpha$    & $[\alpha,\alpha^{5}, \alpha ]s.$  & $\alpha^9$    & $[\underline{\alpha^9},\alpha^{15},\alpha^3,\alpha^{23},\alpha^3 ]ns.$  &  	$\alpha^{17}$    & $[\alpha^{17},\alpha^7,\alpha^{8},\alpha,\alpha^7  ]s.$
	\\
	$\alpha^2$   & $[\alpha^2,\alpha,\alpha^{10},\alpha  ]s.$& 	$\alpha^{10}$    & $[\alpha^{10},\alpha^{14},\alpha^5,\alpha^2,\alpha^{14} ]s.$   & $\alpha^{18}$    & ${\scriptstyle[ \underline{\alpha^{18}},\alpha^{24},\alpha^{12},\alpha^6,0,\alpha^{18} ]ns.}$
	\\
	$\alpha^3$    & $[\underline{\alpha^3},\alpha^{9},{\scriptstyle\alpha^{21}, \alpha^{15}, 0,} \alpha^3 ]ns.$ &  $\alpha^{11}$  & $[\alpha^{11},\alpha^7,\alpha^{10},\alpha^{19},\alpha^7  ]s.$  & 	$\alpha^{19}$    & $[\alpha^{19},{\scriptstyle\alpha^{23},\alpha^{14}},\alpha^{11},\alpha^ {23} ]s.$
	\\
	$\alpha^4$    & $[\alpha^4 ,\alpha^8,\alpha^{23},\alpha^{20},\alpha^8 ]s.$  &  $\alpha^{12}$    & ${\scriptstyle[\underline{\alpha^{12}},\alpha^{18},\alpha^6, \alpha^{24},0,\alpha^{12} ]ns.}$  & 	$\alpha^{20}$    & $[\alpha^{20} ,\alpha^{16},\alpha^{19},\alpha^4,\alpha^ {16}]s.$  
	\\
	$\alpha^5$    & $[\alpha^5,\alpha,\alpha^4,\alpha^{13},\alpha  ]s.$ &  	$\alpha^{13}$    & $[\alpha^{13},\alpha{17},\alpha^8,\alpha^5,\alpha^{17}]s.$  & $\alpha^{21}$    & $[\underline{\alpha^{21}},{\scriptstyle\alpha^{3},\alpha^{15},\alpha^9,0,\alpha^  {21}]}ns.$
	\\
	$\alpha^6$   & $[\alpha^6,\alpha^{12},\alpha^{24},\alpha^{18},0,\alpha^6 ]$&   $\alpha^{14}$    & $[\alpha^{14},{\scriptstyle\alpha^{10},\alpha^{13},\underline{\alpha^{12}},\alpha^  {10}]}ns.$   & $\alpha^{22}$    & $[\alpha^{22},\alpha^2,\alpha^{17},\alpha^{14},\alpha^  2]s.$ 
	\\
	$\alpha^7$    & $[\alpha^7,\alpha^{11},\alpha^2,\alpha^{23} ,\alpha^{11} ]s.$ & $\alpha^{15}$    & $[\underline{\alpha^{15}},{\scriptstyle\alpha^{21},\alpha^{9},\alpha^{3},0,\alpha^{15} ]}ns.$  &  & 
	\\
	$\alpha^8$  & $[\alpha^8 ,\alpha^4,\alpha^7,\alpha^{16},\alpha^4 ]s.$  &  $\alpha^{16}$    & $[\alpha^{16} ,\alpha^{20},\alpha^{11},\alpha^8,\alpha^{20} ]s.$  &  & 
	\\	
	\hline
	\multicolumn{6}{|c|}{$q=27$, {}$M\left(X\right)=X^{3}+X+1$}
	\\
	\hline
	\multicolumn{6}{c|}{$\underline{d=2}$} 
	\\
	$\alpha$    & $[\alpha,\underline{\alpha^4 },\alpha^{13},\alpha^7,\alpha ]ns.$ &  $\alpha^{10}$    & $[\underline{\alpha^{10}}, \alpha^{21},\alpha^{20},\alpha^{17},\alpha^7,$  & 	$\alpha^{19}$    & ${\scriptstyle [\alpha^{19},\alpha^{17},\alpha,\underline{\alpha^{20}}},\alpha^{21},\alpha^{12},$ 
	\\
	&  & & $\alpha^{17} ]ns.$ &  & ${\scriptstyle\alpha^{13},\alpha^{11},}\alpha^2,\alpha^{25},\alpha^{13} ]ns.$
	\\
	$\alpha^2$  & $[\underline{\alpha^2},\alpha^{14},0,\alpha^{15},\alpha^{14}  ]ns.$ & $\alpha^{11}$ & $[\alpha^{11},\alpha^{17},\underline{\alpha^4},\alpha^4  ]ns.$  &  $\alpha^{20}$    & $[\underline{\alpha^{20}},\alpha^{11},{\scriptstyle\alpha^6,}\alpha^{24},\alpha^6 ]ns.$
	\\
	$\alpha^3$ & $[\alpha^3,\underline{\alpha^{12}},\alpha^5,\alpha^{21},\alpha^3  ]ns.$ &  $\alpha^{12}$    & $[\underline{\alpha^{12}},\alpha^7,\alpha^7 ]ns.$  &   $\alpha^{21}$ & $[\alpha^{21},\alpha^{23},\underline{\alpha^{10}},\alpha^{10}  ]ns.$
	\\
	$\alpha^4$ & $[\underline{\alpha^4} ,\alpha^{11},\alpha^8,\alpha^{25},\alpha^{21},$ &  $\alpha^{13}$    & $[\alpha^{13},\alpha^{13} ]s.$ & 	$\alpha^{22}$    & $[\underline{\alpha^{22}},\alpha^{12},\alpha^{8},\alpha^{13},\alpha^3,$
	\\
	& $\hspace{2cm}\alpha^{25}]ns.$  &   &   &  & $\hspace{1.7cm}\alpha^7,\alpha^3 ]ns.$ 
	\\
	$\alpha^5$    & $[\alpha^5,\alpha^{25},\alpha^3,{\scriptstyle\underline{\alpha^8},\alpha^{11},\alpha^{10},}$ &  $\alpha^{14}$    & $[\underline{\alpha^{14}},\alpha^{10},\alpha^{24},\alpha^9,\alpha^{21},$ &  	$\alpha^{23}$    & $[\alpha^{23},\underline{\alpha^{16}},\alpha^{24},\alpha^{12},1,$
	\\
	&$ \hspace{0.4cm} \alpha^4,\alpha^{14},\alpha^{24},\alpha^{10} ]ns.$  &    & $ \hspace{1.7cm} 1, \alpha^9, ]ns.$   &    & $\hspace{1.1cm}\alpha^6, \alpha^5,\alpha^{23}  ]ns.$	
	\\
	$\alpha^6$    & $[\underline{\alpha^6},\alpha^{16},0,\alpha^{19},\alpha^{16}]ns.$ &  $\alpha^{15}$    & $[\alpha^{15},{\scriptstyle\alpha^{23},\alpha^9,\underline{\alpha^{24}}},\alpha^7,\alpha^4, $ &  $\alpha^{24}$    & $[\underline{\alpha^{24}},\alpha^{21},\alpha^2,\alpha^8,\alpha^2  ]ns.$ 
	\\
	&  &   & ${\scriptstyle\alpha^{13},\alpha^{21},\alpha^{18},}\alpha^{17},\alpha^{13}  ]ns.$ &  &
	\\
	$\alpha^7$    & $[\alpha^7,\alpha^{25},\alpha^{12},\underline{\alpha^{12}}  ]ns.$  & 		$\alpha^{16}$    & $[\underline{\alpha^{16}},\alpha^4,\alpha^{20},\alpha^{13},\alpha,$  & $\alpha^{25}$    & $[\alpha^{25},\underline{\alpha^{14}},\alpha^8,\alpha^4,1,\alpha^2, $ 
	\\
	&   &   & $\hspace{1.4cm}\alpha^{11},1,\alpha ]ns.$  &   &$\hspace{1.4cm}\alpha^{19},\alpha^{25}]ns.$
	\\
	$\alpha^8$    & $[\underline{\alpha^8},\alpha^7,\alpha^{18},{\scriptstyle\alpha^{20},}\alpha^{18} ]ns.$ & $\alpha^{17}$    & $[\alpha^{17},\underline{\alpha^{22}},\alpha^{20},\alpha^{10},1,$  &   &
	\\
	&  &  & $\hspace{0.8cm}\alpha^{18},\alpha^{15},\alpha^{17}  ]ns.$ &  &
	\\
	$\alpha^9$   & $[\alpha^9,\underline{\alpha^{10}},\alpha^{15},{\scriptstyle\alpha^{11},}\alpha^{9}  ]ns.$  &  
	$\alpha^{18}$    & $[\underline{\alpha^{18}},\alpha^{22},0,\alpha^5,\alpha^{12} ]ns.$  &  & 
	\\
	\hline	
	\end{tabularx}

\end{document}